\newtheorem{theorem}{Theorem}[subsection]
\newtheorem{proposition}{Proposition}[subsection]
\newtheorem{lemma}{Lemma}[subsection]
\newtheorem{definition}{Definition}[subsubsection]
\newtheorem{corollary}{Corollary}[subsection]
\newtheorem{remark}{Remark}[subsection]
\newtheorem{example}{Remark}[subsection]
\def\hpic #1 #2 {\mbox{$\begin{array}[c]{l} \epsfig{file=#1,height=#2}
\end{array}$}}
\def\vpic #1 #2 {\mbox{$\begin{array}[c]{l} \epsfig{file=#1,width=#2}
\end{array}$}}
\def\ct {\widetilde T}
\newcommand  {\rmn}\romannumeral
\newcommand{\notetoself}[1]{\marginpar{\tiny #1}}
\newcommand {\ds}{\mathscr{D}}
\newcommand {\diff}{Diff$(S^1)$}
\begin{document}
\title{Some unitary representations of Thompson's groups F and T.}
\author{Vaughan F. R. Jones\\Vanderbilt University}
\thanks{V.J. is supported by the NSF under Grant No. DMS-0301173}
\maketitle
\begin{abstract}
In a ``naive'' attempt to create  algebraic quantum field theories on the circle, we obtain
a family of unitary representations of Thompson's groups T and F for any subfactor. 
The Thompson group elements are the ``local scale transformations'' of the theory.
In a simple case the coefficients of the representations are polynomial invariants
of links. We show that all links  arise and introduce new ``oriented'' subgroups 
$\overrightarrow F <F$ and $\overrightarrow T< T$ which allow us to produce all 
\emph{oriented} knots and links.

\end{abstract}

\section{Introduction}
This paper is part of an ongoing effort to construct a conformal field theory for every finite index subfactor in such a way
that the standard  invariant of the subfactor, or at least its quantum double, can be recovered from the CFT. There is no
doubt that interesting subfactors arise in CFT nor that in some cases the numerical data of the subfactor appears as numerical
data in the CFT. But there are supposedly ``exotic'' subfactors for which no CFT is known to exist, the first of which was 
constructed by Haagerup in a tour de force in \cite{H5},\cite{AH}. But in the last few years ideas of Evans and Gannon (see \cite{EG11}) have made
it seem plausible that CFT's exist for the Haagerup and other exotic subfactors constructed in the Haagerup line (see \cite{JMS}). This
has revived the author's interest in giving a construction of a CFT from subfactor data.

The most complete way to do this would be to extract form the subfactor the Boltzmann weights of  a critical two-dimensional lattice
model then construct  a quantum field theory  from the scaling limit of the $n$-point functions. Looking at the monodromy
representations of the braid group one would then construct a subfactor as in the very first constructions of \cite{jo1}.
This ``royal road'' is paved with many mathematical difficulties and it is probably impossible to complete  with current technology except in the
very simplest examples. 

There are alternatives, however, to using the scaling limit of the $n$-point functions. The algebraic (Haag-Kastler \cite{HK},\cite{DHR1},\cite{H}) approach has been 
quite successful in understanding some aspects of conformal field theory-\cite{FRS},\cite{K17},\cite{ek}. After splitting the CFT into two chiral halves, this approach
predicts the existence of ``conformal nets''- von Neumann algebras $\mathcal A (I)$ on the Hilbert space $\mathcal H$, associated to closed intervals $I\subset S^1$, 
and a continuous projective unitary representation $ \alpha \mapsto u_\alpha \mbox{ of } \mbox {Diff} S^1$, on $\mathcal H$, satisfying four axioms:
$$(\romannumeral 1) \qquad \mathcal A(I)\subseteq \mathcal A(J) \mbox{  if  } I\subseteq J$$
$$(\romannumeral 2) \qquad \mathcal A(I)\subseteq \mathcal A(J)' \mbox{  if  } I\cap J=\emptyset$$
$$(\romannumeral 3) \qquad  u_\alpha \mathcal A(I)u_\alpha^{-1}=\mathcal A(\alpha(I))$$
$$(\romannumeral 4)\qquad \sigma(Rot(S^1))\subset \mathbb Z^+\cup \{0\}$$

Here by $Rot(S^1)$ we mean the subgroup of rotations in  \diff. $Rot(S^1)$  may be supposed to act as an honest representation which can therefore 
be decomposed into eigenspaces (Fourier modes). The eigenvalues, elements of $\hat{S^1}=\mathbb Z$, are the spectrum $\sigma$ of the representation.

There may or may not be a vacuum vector $\Omega$ in $\mathcal H$ which would be fixed by the linear fractional transformations in \diff, and would be cyclic 
and separating for all the $\mathcal A(I)$.

The $\mathcal A(I)$ can be shown to be type III$_1$ factors  so subfactors appear by axiom (\romannumeral 2) as $\mathcal A(I')\subseteq \mathcal A(I)'$ where
$I'$ is the closure of the complement of $I$. 

Non-trivial examples of such conformal nets were constructed in \cite{Wa7},\cite{ToL} by the analysis 
of unitary loop group representations (\cite{PSe}). These examples can be
exploited to construct many more.

On an apparently completely different  front, the study of subfactors for their own sake led to the development of ``planar algebras'' which in their strictest form 
(\cite{jo2})
are an axiomatization of the standard invariant of a subfactor but by changing the axioms slightly they yield an axiomatization of correspondences (bimodules) in
the sense of Connes (\cite{C20}), and systems of such. The most significant ingredient of a planar algebra is the existence of a positive definite inner product which
is interpreted diagrammatically. More precisely a planar algebra is a graded vector space $\mathcal P=(P_n)$ of vector spaces where $n$ is supposed to
count the number of boundary points on a disc into which the elements of $P_n$ can be ``inserted''.  Given a planar tangle - a finite collection of discs inside
a big (output) disc, all discs having boundary points and all boundary points being connected by non-crossing curves called strings, the insertion of
elements of $\mathcal P$ into the internal discs produces an output element in $P_n$, $n$ being the number of boundary points on the output disc.

If $P_0$ is one-dimensional the map $\langle x, y\rangle= \vpic{ip2} {0.5in} $ defines a sesquilinear map to $\mathbb C$ and this is the inner product.

The idea of obtaining a ``continuum limit'' by letting the number of boundary points on the discs fill out the circle has been around for over 20 years but this
paper is the first one to take a concrete, though by no means big enough, step in that direction. 
Planar algebra is an abstraction of the notion of (planar) manipulations of the tensor powers of a given finite dimensional Hilbert space (thus in some 
sense a planar version of \cite{Pen}),
and our constructions below of limit Hilbert spaces are really versions, aimed at a scaling rather than a thermodynamic limit, of
von Neumann's original infinite tensor product-\cite{vNdirect}. Background for this point of view is detailed in \cite{J22}.

Given the difficulty of following the royal road using the scaling limit, we are trying to construct the local algebras $\mathcal A(I)$ directly from a planar 
algebra. A well known idea in physics is the block spin renormalization procedure (\cite{CV}). Here one groups the spins in a block on one scale and replaces the blocks
by spins of the same kind on a coarser scale. Hamiltonians (interactions) between the spins and blocks of spins are chosen so that the physics on
the block spin scale resembles the physics on the original scale. This procedure is tricky to implement but we shall use the idea. For, however one
plays it, in constructing a continuum limit one must relate the Hilbert space on one scale to the Hilbert space on a finer scale.  It is this relation that
we are trying to produce using structures suggested by planar algebra.  

More precisely, given a planar algebra $\mathcal P$,  for a choice of $n$ points on $S^1$, called $B_n$, we will associate the Hilbert space $\mathcal H_n =P_n$ , and for an inclusion $B_n\subset B_m$ we will use planar algebra data to group boundary points into blocks and construct a projection from $\mathcal H_m$ onto $\mathcal H_n$.
Alternatively we are defining isometries of $\mathcal B_n$ into $\mathcal B_m$ and
the Hilbert space $\mathcal H$ of the theory will then be the direct limit of the $\mathcal H_n$.

The first embeddings of $\mathcal B_n$ into $\mathcal B_m$ will work for \emph{any} finite subsets of $S^1$ by simply adding points using an
essentially trivial element of the planar algebra.  We will briefly describe these embeddings as they are a simplified template for
what follows. In particular one obtains projective unitary representations of \diff (and coherent local algebras if desired). But  this construction fails to be of any interest for two essential reasons. The first is that the unitary representations are independent of the planar algebra data used.  The second is more serious-
the action of \diff is hopelessly discontinuous.  

We have no answer to the discontinuity problem. In a sense it would be surprising if we did since our input is entirely kinematic. We have not constructed
any Hamiltonian, local or otherwise, and we have done nothing to cause positivity of the energy. We hope to construct dynamics in a forthcoming paper.

On the other hand we have managed to take into account more of the planar algebra structure and with it we have been led to unitary (projective)
representations of Thompson's groups of $PL$ homeomorphisms of $S^1$ and $[0,1]$ which play the role of \diff in our not yet continuum limit.
The idea is to use an element of $R\in P_n$ for $n>2$ to embed Hilbert spaces associated with finitely many points into each other by grouping together
$n-1$ ``spins'' on one scale into a single spin on a more coarse scale. This is just what is done in block spin renormalisation. 

Although this block spin idea does not introduce dynamics, we will see that it does produce interesting unitary representations of $T$ and $F$. In particular
these representations do depend on the planar algebra data used to construct them. A perhaps surprising byproduct arises if one uses ``crossings'' from the Conway knot-theoretic skein
theory and knot polynomial theory. For then the coefficient of the ``vacuum vector'' in the representation is an unoriented link. One of our main results
is that all unoriented links arise in this way. 

Thompson group elements are often defined by pairs of bifurcating rooted planar trees with the same number of leaves. There is a simple construction of the link
from the pair of trees.  The stabilization move (canceling carets- see \cite{CFP}) corresponds to adding a distant unknot to the link. But a pair of trees with
no canceling carets is in fact a well defined description of a Thompson group element and the algorithm we  give, which produces a Thompson group
element from a link, will not introduce any spurious unknots. 

\emph{So, at least for unoriented links, the Thompson group $F$ is just as effective at producing links as the braid groups.}

 But one may use simpler planar algebras, the simplest possible being that of an index 2 subfactor. A natural choice
of $R$ then gives representations that are in fact linearized permutation representations (quasi-regular representations). The stabilizers of the vacuum 
in this representation are subgroups $\overrightarrow F <F$ and $\overrightarrow T< T$ which we call the oriented Thompson groups since a certain surface
constructed from them is always orientable. Sapir has shown the delightful result that $\overrightarrow F$ is in fact a copy of Thompson's group $F_3$ inside
$F(=F_2)$!

Another motivation for this work was to try to exploit the list of subfactors of small index \cite{JMS}.  Typically these factors are quite ``supertransitive'', i.e. the new information
they contain does not appear in $P_n$ until $n$ is $>2$. We would like in future work to extend our theory to use elements of these
higher $P_n$'s to block spins together.

\section{Some notions of planar algebra.} \label{planar}

A planar algebra $\mathcal P$ is a vector space $P_n$, graded first and foremost by $\mathbb N\cup \{0\}$ and admitting multilinear operations indexed
by \emph{planar tangles} $T$ which are subsets of the plane consisting of a large (output) circle containing smaller (input circles). There are also 
non-intersecting smooth curves called strings whose end points, if they have any, lie on the  circles where they are called marked points. Elements of $\mathcal P$ are ``inserted'' into 
the input circles with an element of $P_n$ going into a disc with $n$ marked points, and the result of the operation specified 
by the tangle is in $P_k$ where
there are $k$ marked points on the output circle. In order to resolve cyclic ambiguities, each of the circles of $T$ comes with
a privileged interval between marked points which we will denote in pictures by putting a $\$ $ sign near that interval.

Here is an example of a planar tangle: \\

\hpic{tanglesample} {2in}

The result of the operation indexed by $T$ on elements of $\mathcal P$ is denoted $Z_T(v_1,v_2,\cdots, v_n)$ where there are $n$ input discs. See \cite{jo2} for details.
The operation $Z_T$ depends only on $T$ up to smooth planar isotopy so one has a lot of freedom drawing the tangles, in particular the circles may be replaced
by rectangles when it is convenient. 
Tangles may also be ``labelled'' by actually writing appropriately graded elements of $\mathcal P$ inside some of the internal circles.

\begin{definition} \label{partition function}Given a  planar tangle $T$ all of whose internal circles are labelled by  $v_1,v_2,\cdots, v_n$
 we call  $Z_T(v_1,v_2,\cdots, v_n)$
 the element of $P_k$ which it defines. If $k=0$ and the dimension of $P_{0}$ is one, this may be identified
 with a scalar using the rule that $Z(\rm{empty tangle})=1$.
 \end{definition}

Planar tangles can be glued in an obvious way along input circles and  the operations $Z_T$ are by definition compatible with
the gluing.

Planar algebras come in many varieties according to further decorations of the planar tangles such as oriented strings, labelled strings, coloured 
regions. These decorations are incorporated by further grading of the vector spaces $P_n$. Perhaps the easiest such decoration is a shading, where
the regions of the tangle are shaded or unshaded in such a way that adjacent regions have opposite shadings. This forces the number of 
boundary points of each circle to be even so for historical reasons the grading then becomes half the number of marked points. This can 
cause confusion. When $T$ is shaded, the $\$ $ signs may be in shaded or unshaded
Even in the shaded case it can be cumbersome to drag along all the shadings and $\pm$ signs.
So when it suits us we will ignore them and trust that the reader can fill in the details by copying the instances
where we do, out of necessity, consider the shadings carefully.

 For connections with physics and von Neumann algebras, planar algebras will have more structure, namely an antilinear involution $*$ 
 on each $P_n$ compatible with orientation reversing diffeomorphisms acting on tangles. If $dim P_0=0$ we get a sesquilinear
 inner product $\langle x, y\rangle$ on each $P_n$ given in the introduction. 
 
 A planar algebra will be called \emph{positive definite} if this inner product is. 
 
 In order to accommodate knot-theoretic applications
 we will  not in general restrict to the situation where the $P_n$ are finite dimensional and $\langle, \rangle$ is positive definite.
 
 Our planar algebras will all have a parameter $\delta$ which is the 
value of a homologically trivial closed string which may be removed from any tangle with multiplication by
the scalar $\delta$.

Two examples of planar algebras should be mentioned. The first is the Temperley-Lieb algebra  $TL$ (which has its origins
in \cite{TL} though its appearance here should properly be attributed to \cite{Kff2}, via \cite{Ba2}-see also \cite{J9}) which may be
shaded or unshaded. A basis of $TL_n$ consists of all isotopy classes of systems of non-crossing strings joining
$2n$ points inside the disc. The planar algebra operations are the obvious gluing ones with the rule that any
closed strings that may be formed in the gluing process are discarded but each one counts for a multiplicative
factor of $\delta$.  This planar algebra is positive definite iff $\delta \geq 2$.

The second examples of  planar algebras which we will use are the spin planar algebras. For fixed integer $Q\geq 2$ one considers
$\{1,2,\cdots, Q\}$ as a set of ``spins''. The shaded planar algebra $\mathcal P^{spin}$ is then defined by 
$P^{spin}_0,+=\mathbb C$, $P^{spin}_0,-=\mathbb C^Q$ and for $n\geq 1$,  $P^{spin}_{n,\pm}=\mathbb C^{n}$
(using half the number of marked boundary points).  Shaded tangles then give contraction systems for spin
configurations as explained in \cite{jo2}. This kind of planar algebra is what is used in several 2 dimensional statistical
mechanical models such as the Ising, Potts and Fateev-Zamolodchikov models. It is important to note that closed
strings have parameter either one or $Q$ depending on whether they enclose an unshaded or shaded region respectively.

The various tangles may be used to endow a planar algebra with a large number of algebraic structures such
as filtered algebras, graded algebras and tensor categories. 
We will be interested only in two such structures and their representations, the rectangular and affine categories.
\begin{definition} The \emph{rectangular category} $Rect(\mathcal P)$ of a planar algebra will be 
the (linear) category whose objects are $\mathbb N\cup \{0\}$ (along with other decorations according to shading etc)
and whose set of morphisms from $n$ to $m$ is the vector space $P_{n+m}$, with composition according to the following
concatenation tangle read from top to bottom (which makes each $P_n$ into an algebra):\\

Composition of $x\in Mor(5,3)$ with $y\in Mor(3,7)$: \vpic{rectangcomp} {1.2in}

A representation of $Rect(\mathcal P)$ will be called a rectangular representation.
\end{definition}
A planar algebra always admits a privileged rectangular representation:
\begin{definition} \label{regular} The \emph{regular representation} of the planar algebra $\mathcal P$ is the one with $V_n=P_n$ for all 
$n$ (with $\pm$ in the shaded case) and the action being just concatenation of rectangular tangles.
\end{definition}

The regular representation has a $P_0$-valued invariant inner product given by  $\langle x, y\rangle$ as below. 

\hpic{povaluedip} {1.5in}

where  we have been careful to close the tangle with
strings to the left as we will be dealing with a non spherically invariant planar algebra below.

In the subfactor case any irreducible rectangular representation is a subrepresentation of the regular representation but in general we may define
rectangular representations by minimal projections $p$ in the algebras $P_k$, the vector space $V_n$ of the representation being equal to 
$pP_n $ with $p$ considered as the element $pe_{k+1}e_{k+3}\cdots e_{n-1}\in P_n$  for $k\geq n$ and $V_k=0$ for $k<n$, with action of morphisms by
concatenation. For example:\\

Action of $y\in Mor(5,7)$ on $xp\in V_5$:
\vpic{actionrect} {1.6in}

\begin{definition} The \emph{affine} category $Aff(\mathcal P) $ is the (linear) category whose objects are sets $\bar m$ of $m$ points
on the unit circle in $\mathbb C$ (along with other decorations according to shading etc)
and whose vector space of  morphisms from $\bar m$ to $\bar n$ is the set of linear combinations of labelled tangles 
(with marked boundary points $\bar m\cup \bar n$) between the unit circle and a circle of
larger radius  modulo any relations in $\mathcal P$ 
which occur within contractible discs between the unit circle and the larger circle. Composition of morphisms comes from rescaling and gluing 
the larger circle of the first morphism to the smaller circle of the second. 

\end{definition}
 
Use of $\bar m$ adds to clutter so we will abuse notation by using just $m$ for an object of $Aff(\mathcal P) $ with $m$ points, though when 
we need it later on we will be quite careful to specify what sets of $S^1$ we are talking about.

One needs to be careful with this definition (see \cite{jo3},\cite{gl}). In a representation of $Aff(\mathcal P) $, morphisms may be changed by planar  isotopies 
without affecting the action, but the
isotopies are required to be the identity on the inner and outer circles.   Thus the tangle of rotation by 360 degrees does not necessarily act by the
identity in a representation of $Aff(\mathcal P)$.

The representations we will consider of $Aff(\mathcal P)$ are called lowest weight modules and may be defined as in \cite{jo3} by taking a representation $W$
of the algebra $Mor(n,n)$ for some $n$ (the ``lowest weight'') and inducing it in the obvious way. This may cause problems with positive definiteness but it is known that
subfactor planar algebras possess a host of such representations. The vector spaces $V_k$ of such a lowest weight representation  are zero 
if $k\leq n$ and spanned by  diagrams consisting of a vector $w\in W$ inside a disc with $n$ marked points, surrounded by a labelled planar tangle
of $\mathcal P$ with $k$ marked points on the output circle. 
 
 Here is a vector $w$ in  a $V_6$ created by the action of an affine morphism on $v$ in the lowest weight space $V_2$: 
  and the action of a morphism in $Aff(\mathcal P)$ on it:\\
 
 \hpic{creation} {1in}

 and  here is a diagram with  the action of a morphism in $Mor(6,4)$ on it:\\
 
 \hpic{actonw} {1.5in}

As with rectangular representations the planar algebra itself defines an affine representation simply by applying annular labelled
tangles to elements of $\mathcal P$ but now the representation  is irreducible  and plays the role of the trivial representation.

In the TL case which is what we will mostly consider, irreducible lowest weight representations are parametrized by their lowest weight (the smallest $n$ 
for which $V_n$ is non-zero), and a complex number of absolute value one which is the eigenvalue for the rotation tangle.
The case $n=0$ is exceptional and the rotation is replaced by the tangle which surrounds an element $v$ of $V_0$ by a circular string (two strings in
the shaded case). $v$ is an eigenvector for this tangle and there are some restrictions on the eigenvalue $\mu$-see \cite{jo3},\cite{JR}.
The case where $\mu=\delta$ is precisely the trivial representation. In this case the vector $v\in V_0$ is the empty diagram so never features
in pictures.

\section{The Hilbert spaces.}
\subsection{One-box version.}\label{1box}
Let $P=\{P_{n}\}$ be an unshaded positive definite planar algebra.
Suppose we are given an element $R\in P_1$ with $R^*R=1$. Suppose we are also given an affine
representation $V=\{V_n\}$of $P$ with lowest weight space $V_k$. 

With no more data than this we can make a naive construction of a Hilbert space ``continuum'' limit.

\begin{definition}. If $\mathcal F$ is the directed set of finite subsets of $S^1$, directed by
inclusion, define the directed system of Hilbert spaces $F\mapsto V_F$ for $F\in \mathcal F$,
with inclusion maps $\iota _{F_1}^{F_2}$ (for $F_1\leq F_2$) defined by the following affine tangles:
\end{definition}

\hpic {iota36} {1.5in}

In this example there are 3 points in $F_1$ and 6 in $F_2$. The points in $F_1$ are to be joined
to themselves on the outside boundary by radial straight lines, and the points in $F_2\setminus F_1$
are attached to an instance of $R$. Since $R^*R=1$ these $\iota$'s define isometric embeddings
of $V_{F_1}$ into $V_{F_2}$ which are obviously compatible with inclusions. Thus the direct limit of these Hilbert spaces can be itself completed to
form a Hilbert space $\mathfrak H=\lim_{F\in \mathcal F} V_F$.

This Hilbert space $\mathfrak H$ obviously carries a projective unitary representation of $Diff(S^1)$ 
and one can define local algebras $\mathcal A (I)$ in such a way that the first three properties of a conformal 
net defined in the introduction are satisfied. But the situation is caricatural. The Hilbert space is not separable
and the action of the diffeomorphism group is hopelessly discontinuous. 

One may remedy the lack of separability by restricting consideration to the net of finite subsets of a countable set
like the rationals but this does not make things any more continuous. If we restrict to the dyadic rationals we 
obtain unitary representations of Thompson's group $T$ and by restriction to Thompson's group $F$.
But we can show that all these representations of $F$ are direct sums of representations on
$\ell^2(\{\mbox{subsets of size }k \mbox{ of the dyadic rationals in  } [0,1]\})$ So this construction has
given very little, both from a physical and mathematical point of view. 

Undeterred, we will make a similar construction which uses more planar algebra data than just an
element of $P_1$. This will at least give something on the mathematical side, namely some interesting
representations of the Thompson groups.

\subsection{Two-box version.} 

\begin{definition}\label{normalized} Let $P=\{P_{n,\pm}\}$ be a subfactor  planar algebra. An element $R\in P_2$ will be said to be
\emph{normalized} if \\

\qquad \qquad \vpic{rdotr} {1.5in}
\end{definition}

 For instance if $\dim P_1=1$ one may take any vector  $v$ in $P_2$  with $\langle v,v\rangle = \delta$.

 \subsubsection{A directed set construction.}
The construction of the Hilbert space in this section was inspired directly by the idea of block spin
renormalisation where groups of spins on a lattice at one scale are combined to give blocks which from
single spins on the same lattice at a coarser scale. Mathematically it has proven  better to give a more flexible construction  where
 one does not have to be block together all the spins at once. This 
approach was suggested by Dylan Thurston.

An interval $[a,b]\subseteq [0,1]$ is said to be \emph{standard dyadic} if there are  non-negative integers $n,p$ such that
$\displaystyle a=\frac{p}{2^n}$ and $\displaystyle b=\frac{p+1}{2^n}$. See \cite{CFP}.

As is often the case in mathematics, a partition of a closed interval $I$ will be a (finite) family of closed subintervals with disjoint
interiors whose union is $I$.And, given two partitions $\mathcal I$ and $\mathcal J$ of $[0,1]$ into closed intervals
we say $\mathcal J$ is a refinement of $\mathcal J$, $\displaystyle \mathcal I \lesssim \mathcal J\mbox{, if for all }I\in \mathcal I, I=\underset{J\subseteq I, J \in \mathcal J}\bigcup J$

\begin{definition}  The directed set $ \ds $ is the set of all partitions of $[0,1]$ into \emph{standard dyadic intervals}, with
order as above. 
For each $\mathcal I\in \ds$  we let $M(\mathcal I)$ be the set of midpoints of the intervals $I\in \mathcal I$, and $E(\mathcal I)$
to be  $M(\mathcal I)\cup E$ where $E$ is the set of all endpoints of intervals in $\mathcal I$ except $0$ and $1$.

\end{definition}

The relation $\{[0,1]\}\lesssim \mathcal I$ for $\mathcal I  \in \ds$ can be represented as a planar rooted bifurcating tree in $[0,1]\times[0,1]$ whose leaves
are $M(\mathcal I)\times\{ 1\}$ and whose root  is $(0,\frac{1}{2})$ in a way described in \cite{CFP}. Thus since a standard dyadic interval is just
a rescaled version of $[0,1]$, a pair $\mathcal I \lesssim \mathcal J$ can be represented as a planar forest whose roots are $M(\mathcal I)\times \{0\}$ and
whose leaves are $M(\mathcal J)\times \{1\}$. We illustrate below:\\

\qquad Tree: \qquad \vpic{tree1} {1.5in}  \\

\qquad Forest: \qquad \vpic{forest1} {1.5in} 

\begin{definition} For $\mathcal I \lesssim \mathcal J \in \ds$ we call $F_{\mathcal J}^{\mathcal I}$ the forest defined above. \end{definition}

Note that if $\mathcal I \lesssim \mathcal J $ and $\mathcal J \lesssim \mathcal K$  then  $F^{\mathcal I}_{\mathcal K}$ is obtained
by stacking  $F_{\mathcal K}^{\mathcal J}$ underneath of $F_{\mathcal J}^{\mathcal I}$.

We will define  nets of vector spaces on $\ds$ using rectangular
representations of a planar algebra as defined in section \ref{planar}

\begin{definition}\label{xi} The element $\xi\in P_1$ will be the element given by a tangle with a single straight line connecting
the two boundary points. The rectangular representation $\Xi$ will be the sub representation of the regular representation
generated by $\xi$.
\end{definition}

\begin{definition} Given a planar algebra $\mathcal P$ we define  $Cat(P)$ to be the category with objects $\mathcal I$ for $\mathcal I\in \ds$ and 
morphisms $Mor(\mathcal I,\mathcal J)$ for $\mathcal I\lesssim \mathcal J$
being labelled planar tangles of $\mathcal P$ inside rectangles with marked boundary points $E(\mathcal I)$ on the top and $E(\mathcal J)$ on the bottom.
\end{definition}
Note that a rectangular representation $V$ of a planar algebra gives a representation of $Cat(\mathcal P)$ by taking $Z$ of  labelled tangles.

Now for each $\mathcal I \lesssim \mathcal J$ in $\ds$ we complete the diagram $F_{\mathcal J}^{\mathcal I}$ to a labelled tangle 
$T_{\mathcal J}^{\mathcal I}\in Mor(\mathcal I,\mathcal J)$  of $Cat(\mathcal P)$
inside $[0,1]\times[0,1]$  defined by replacing each 
trivalent vertex of the forest $F_{\mathcal J}^{\mathcal I}$ by an instance of $R$, with a vertical straight line joining the top of the disc containing $R$ to the corresponding common boundary
point of two intervals of $\mathcal J$ directly below it, and vertical lines connecting all the end points of intervals of $\mathcal I$ to those 
endpoints not connected to discs containing $R$'s,  thus:

\qquad \hpic{maketangle} {1.7in}  \\

\hspace {1in} $F_{\mathcal J}^{\mathcal I}$ \hspace{1in}  $\rightarrow$ \hspace{1in}
$T_{\mathcal J}^{\mathcal I}$

\begin{definition} \label{blowup}Given $\mathcal P$ and a rectangular representation $V=(V_n)$ of it, and an element $R\in P_2$ as above we define the directed system of vector spaces
$\mathcal I\mapsto  H(\mathcal I)$ for $\mathcal I\in \ds$ by $H( \mathcal I)=V_{|E(\mathcal I)|}$ and for 
$\mathcal I \lesssim \mathcal J$ in $\ds$ we use the tangle $T_{\mathcal J}^{\mathcal I}$ to define a map from 
$H( \mathcal I)$ to $H( \mathcal J)$ as required by the definition of a directed system.
 \end{definition}

Note that the functorial property of a directed system is that if $\mathcal I \lesssim \mathcal J $ and $\mathcal J \lesssim \mathcal K$
then $T_{\mathcal K}^{\mathcal J}\circ T_{\mathcal J}^{\mathcal I}=T_{\mathcal K}^{\mathcal I}$ which is true since stacking of
the $F_{\mathcal J}^{\mathcal I}$ corresponds to composition of the tangles 
in $Cat(\mathcal P)$.

\begin{definition}\label{hilbert} The direct limit vector space $\mathcal V=\lim_{\mathcal I \in \ds} H_{\mathcal I}$ is called the \emph{dyadic limit}
of $V$. If $\mathcal P$ is a subfactor planar algebra, $\mathcal V$ becomes a pre-Hilbert space since all the $T_{\mathcal J}^{\mathcal I}$
define isometries. Its Hilbert space completion is called the \emph{dyadic Hilbert space} $\mathcal H_{R,V}$ of $V$.
\end{definition}

Thus elements of $\mathcal V$ are equivalence classes of elements of $\underset{\mathcal I}\coprod H(\mathcal I)$ with $x\in H(\mathcal I)$ equivalent
to $y \in H(\mathcal J)$ iff there is a $\mathcal K$, $\mathcal I \lesssim \mathcal K$ and $\mathcal J\lesssim \mathcal K$ with $T_{\mathcal K}^{\mathcal I}(x)=T_{\mathcal K}^{\mathcal J}(y)$.
The $T_{\mathcal I}^{\mathcal J}$ are all injections so each $H(\mathcal I)$ is a vector subspace of $\mathcal V$.

Note that if ${\mathcal I_n}$ is the partition into all standard dyadic intervals of
width $2^{-n}$,
 the sequence $H_{\mathcal I_n}$ is cofinal in $\ds$. Going from $H_{\mathcal I_{n+1}}$ to $H_{\mathcal I_n}$ is supposed to represent
a "block spin renormalization" step where the spins sitting at the points $E(\mathcal I_{n+1})$ are grouped together into spins sitting at
the points $E(\mathcal I_{n})$.

\subsubsection{Unitary representations of Thompson's group $F$ from rectangular $\mathcal P$-modules.}

We will define representations of the Thompson group $F$ on the dyadic limit of a rectangular representation 
$V$ of a planar algebra for normalised $R$  ( \ref{normalized}). They will extend to unitary representations in 
the case of a subfactor planar algebra.

Observe first that  $F$ acts on partitions preserving the order, but does not  preserve $\ds$ since an element of $F$ does not necessarily send a
standard dyadic interval to another one. 

\begin{definition} We say $\mathcal J\in \ds$  is \emph{good for} $g\in F$ if  \, $g(\mathcal J)\in \ds$. \\
And in that case 
we define the tangle $g_{\mathcal J} $ as the rectangular tangle in $Mor( \mathcal J,g(\mathcal J))$ with straight lines connecting all the points in $E(\mathcal J)$
to their images under $g$ thus:\\

\hspace{1in} \hpic {thompsontangle} {1.5in}

\end{definition}

Given  $g\in F$ it follows from \cite{CFP} that there is a $\mathcal J$ which is good for it. Also since $g$ does nothing but scale and translate on
each $J\in \mathcal J$ it follows that $g(E(\mathcal J))=E(g(\mathcal J))$ and $g(M(\mathcal J))=M(g(\mathcal J))$.

\begin{lemma} \label{main} If  $\mathcal J$ is good for $g$ and $\mathcal J\lesssim \mathcal K$ then $\mathcal K$ is good for $g$ and
 $ g_{\mathcal K}\circ T_{\mathcal K}^{\mathcal J} $ is isotopic as a labelled tangle to $T_{g(\mathcal K)}^{g(\mathcal J)}\circ g_{\mathcal J} $.
\end{lemma}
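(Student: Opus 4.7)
The plan is to handle the two assertions in sequence: the goodness of $\mathcal K$ is a quick piecewise-affine check, and the essential content is the tangle isotopy, which comes from the fact that $g$ acts affinely and dyadically on each $J\in\mathcal J$. For goodness: since $\mathcal J$ is good for $g$, the restriction $g|_J$ is an affine map with slope a power of $2$ and dyadic translation, sending $J$ to the standard dyadic interval $g(J)$. For any $K\in\mathcal K$ there is a unique $J\in\mathcal J$ with $K\subseteq J$, and a dyadic affine map carries standard dyadic subintervals to standard dyadic subintervals; hence $g(K)=g|_J(K)$ is standard dyadic and $g(\mathcal K)\in\ds$.

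I would then set up the combinatorial correspondence. The forest $F_{\mathcal K}^{\mathcal J}$ decomposes as a disjoint union $\bigsqcup_{J\in\mathcal J}T_J$ of planar bifurcating trees, where $T_J$ encodes the dyadic subdivision of $J$ by $\mathcal K\cap J$. Because $g|_J$ is a dyadic affine homeomorphism, it induces an order-preserving bijection $M(\mathcal K)\cap J\leftrightarrow M(g(\mathcal K))\cap g(J)$ and hence an isomorphism $T_J\cong T_{g(J)}$ of planar bifurcating trees. Consequently $F_{g(\mathcal K)}^{g(\mathcal J)}=\bigsqcup_{J\in\mathcal J}T_{g(J)}$ carries the ``same'' trees as $F_{\mathcal K}^{\mathcal J}$, only translated and rescaled horizontally, and the $R$-labelled discs of $T_{\mathcal K}^{\mathcal J}$ and $T_{g(\mathcal K)}^{g(\mathcal J)}$ are in canonical strip-by-strip bijection.

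Next I would construct the planar isotopy explicitly. Stack both compositions in $[0,1]\times[0,2]$, with $E(\mathcal J)$ along $y=2$ and $E(g(\mathcal K))$ along $y=0$. In $g_{\mathcal K}\circ T_{\mathcal K}^{\mathcal J}$ the $R$-discs sit in the strips $J\times[1,2]$ and the straight lines of $g_{\mathcal K}$ fill $[0,1]\times[0,1]$; in $T_{g(\mathcal K)}^{g(\mathcal J)}\circ g_{\mathcal J}$ the $R$-discs sit in the strips $g(J)\times[0,1]$ and the straight lines of $g_{\mathcal J}$ fill $[0,1]\times[1,2]$. I would define a time-parametrized family of homeomorphisms which, within each strip, linearly interpolates horizontally between the identity on $J$ and the affine map $g|_J:J\to g(J)$, while simultaneously sliding the $R$-discs downward from their heights in $[1,2]$ to the matching heights in $[0,1]$; the connecting strings deform along with the discs, and the boundary points at $y=0$ and $y=2$ are held fixed. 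At the end of the isotopy, the discs occupy exactly the positions prescribed by $T_{g(\mathcal K)}^{g(\mathcal J)}$, and the region above them reduces to the straight lines of $g_{\mathcal J}$.

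The main obstacle is to verify that this deformation really is a planar isotopy of \emph{labelled} tangles, i.e.\ that no crossings, disc collisions, or label changes occur. The key facts that make it work are: (i) every endpoint of a $\mathcal J$-interval lies in $E(\mathcal J)\cap E(\mathcal K)$, so vertical lines at these points provide ``walls'' separating strips throughout the isotopy; (ii) the $R$-discs of each $T_J$ lie strictly in the interior of the $J$-strip initially and strictly in the interior of the $g(J)$-strip finally, so the sliding discs never touch a strip boundary; (iii) $g$ is a homeomorphism with each $g|_J$ affine, so the strip-wise interpolations agree on shared endpoints and patch together into a single ambient planar isotopy; and (iv) all $R$-discs map to $R$-discs, preserving labels. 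Once these checks are in place, the strip-wise isotopies assemble into the required global planar isotopy between $g_{\mathcal K}\circ T_{\mathcal K}^{\mathcal J}$ and $T_{g(\mathcal K)}^{g(\mathcal J)}\circ g_{\mathcal J}$.
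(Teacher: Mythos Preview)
Your proof is correct and follows essentially the same strip-by-strip sliding argument as the paper: decompose the forest over the intervals $J\in\mathcal J$, use that $g|_J$ is a dyadic affine bijection $J\to g(J)$, and slide each tree of $R$-discs from the $J$-strip down into the $g(J)$-strip while the straight-line portion migrates upward. You are also more explicit than the paper about why $\mathcal K$ is good for $g$, which is a welcome addition.

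One small imprecision worth tightening: in your point (i) the ``walls'' separating adjacent strips are not literally vertical lines throughout the isotopy. In the stacked picture the string through a $\mathcal J$-endpoint $p$ is vertical in the $T_{\mathcal K}^{\mathcal J}$-layer but slanted (from $p$ to $g(p)$) in the $g_{\mathcal K}$-layer, and during the isotopy this string deforms along with everything else. What actually makes the argument work is that $g$ is order-preserving, so these endpoint strings are pairwise non-crossing at every stage and continue to bound disjoint ``trapezoidal'' strips in which the respective $R$-discs live; with that correction your checks (ii)--(iv) go through exactly as written.
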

\begin{proof}
We focus on a fixed interval $J\in\mathcal J$ and build the isotopy up interval by interval. The rectangle in $Cat(\mathcal P)$ with $J$ at the top and the
various $K\in \mathcal K$ whose union is $J$ at the bottom contains a trifurcating tree whose leaves are alternately midpoints and endpoints
of those $K$'s. Slide that tree along the two connected straight line segments connecting the midpoint of $J$ to the midpoint of $g(J)$. 
The tree is now contained as smooth curves in the trapezoid in the bottom rectangle with $J$ at the top and $g(J)$ at the bottom. Repeat
the procedure for each $J\in \mathcal J$ to obtain a lower  rectangle with a forest isotopic to that of $T_{\mathcal J}^{\mathcal K}$
and an upper rectangle containing only vertical straight line segments connecting the points of $E(\mathcal J)$ at the top to points at 
the bottom of the top rectangle. See the second picture in the figure below. Now stretch and squeeze the bottom of the top
rectangle so that the intervals of $\mathcal J$ become those of $g(\mathcal K)$. Extending that isotopy to the whole picture, 
The upper rectangle becomes isotopic to $T_{g(\mathcal K)}^{g(\mathcal J)}$.

 \hpic{isotopy1} {3.5in}  \\
\end{proof}

Now suppose $g\in F$ and $v\in H(\mathcal I)$ for some $\mathcal I\in \ds$. Choose some $\mathcal J\gtrsim \mathcal I$ which is good for $g$.
Set $\rho_{\mathcal J}(v)=g_{\mathcal J}\circ T_{\mathcal J}^{\mathcal I}(v)\in H(g(\mathcal J))$ where we recall that $Cat(\mathcal P)$  acts
on the $V_n$.

\begin{proposition}\label{independence}The element $\rho_{\mathcal J}(v)$ is independent, in the dyadic limit, of $\mathcal J$ chosen as above.
\end{proposition}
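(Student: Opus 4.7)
The plan is to show that any two witnesses $\mathcal J_1, \mathcal J_2 \gtrsim \mathcal I$ that are good for $g$ produce images that become equal after pushing forward to a common refinement downstream, which is exactly the condition for equivalence in the dyadic limit.

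First I would find a common refinement. Given $\mathcal J_1,\mathcal J_2 \in \ds$, pick any $\mathcal K \in \ds$ with $\mathcal J_1 \lesssim \mathcal K$ and $\mathcal J_2 \lesssim \mathcal K$ (such a $\mathcal K$ exists because any two standard dyadic partitions admit a common standard dyadic refinement, e.g.\ the partition into all standard dyadic intervals of a sufficiently small width $2^{-n}$). By Lemma \ref{main}, $\mathcal K$ is automatically good for $g$, and hence $g(\mathcal K) \in \ds$ is a common refinement of $g(\mathcal J_1)$ and $g(\mathcal J_2)$. This places $\rho_{\mathcal J_1}(v)$ and $\rho_{\mathcal J_2}(v)$ in spaces $H(g(\mathcal J_1))$ and $H(g(\mathcal J_2))$ that both map into $H(g(\mathcal K))$.

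Next I would compute the images in $H(g(\mathcal K))$. For each $i \in \{1,2\}$,
\[
T_{g(\mathcal K)}^{g(\mathcal J_i)}\bigl(\rho_{\mathcal J_i}(v)\bigr)
= T_{g(\mathcal K)}^{g(\mathcal J_i)}\circ g_{\mathcal J_i}\circ T_{\mathcal J_i}^{\mathcal I}(v).
\]
Apply Lemma \ref{main} with the pair $\mathcal J_i \lesssim \mathcal K$ to replace $T_{g(\mathcal K)}^{g(\mathcal J_i)}\circ g_{\mathcal J_i}$ by the isotopic labelled tangle $g_{\mathcal K}\circ T_{\mathcal K}^{\mathcal J_i}$, which gives the same morphism in $Cat(\mathcal P)$ and hence the same operator in the rectangular representation. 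This turns the above expression into $g_{\mathcal K}\circ T_{\mathcal K}^{\mathcal J_i}\circ T_{\mathcal J_i}^{\mathcal I}(v)$. Now invoke the functoriality of the $T$'s noted after Definition \ref{blowup} (stacking of forests corresponds to composition of tangles), so $T_{\mathcal K}^{\mathcal J_i}\circ T_{\mathcal J_i}^{\mathcal I} = T_{\mathcal K}^{\mathcal I}$, and the expression collapses to $g_{\mathcal K}\circ T_{\mathcal K}^{\mathcal I}(v)$, manifestly independent of $i$.

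Thus $\rho_{\mathcal J_1}(v)$ and $\rho_{\mathcal J_2}(v)$ have the same image in $H(g(\mathcal K))$ under the appropriate transition maps of the directed system, so they represent the same class in $\mathcal V = \varinjlim H(\mathcal I)$. The only real content that could cause trouble is the equality of the two labelled tangles at the level of $Cat(\mathcal P)$, but this is precisely what Lemma \ref{main} provides; the rest is a bookkeeping argument about refinements and functoriality of the forest stacking. I expect the main subtlety to be confirming that the common refinement $\mathcal K$ can always be chosen inside $\ds$ (so that Lemma \ref{main} applies), but this is handled by the cofinality of the partitions $\mathcal I_n$ into dyadic intervals of width $2^{-n}$ for sufficiently large $n$.
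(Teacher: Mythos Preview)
Your proposal is correct and follows essentially the same approach as the paper: take a common refinement $\mathcal K$ of the two choices, push both images into $H(g(\mathcal K))$, and use Lemma \ref{main} together with the functoriality $T_{\mathcal K}^{\mathcal J_i}\circ T_{\mathcal J_i}^{\mathcal I}=T_{\mathcal K}^{\mathcal I}$ to see that both equal $g_{\mathcal K}\circ T_{\mathcal K}^{\mathcal I}(v)$. Your write-up is in fact more explicit than the paper's, which contains a self-referential typo (it cites \ref{independence} where it should cite Lemma \ref{main}).
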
 
\begin{proof}
If $\mathcal J '$ is another choice with $\mathcal J '\gtrsim \mathcal I$ and which is good for $g$ then there is a $\mathcal K\in \ds$ with
$\mathcal K\gtrsim \mathcal J$ and $\mathcal K\gtrsim \mathcal J '$. In the dyadic limit, $T_{g(\mathcal K)}^{g(\mathcal J)}(\rho_{\mathcal J}(v))=\rho_{\mathcal J}(v)$
and $T_{g(\mathcal K)}^{g(\mathcal J')}(\rho_{\mathcal J '}(v))=\rho_{\mathcal J '}(v)$. But by \ref{independence} and the directed system property we see that these are both equal to
$g_{\mathcal K}(T_{\mathcal K}^{\mathcal I}(v))$.
\end{proof}

\begin{corollary} Suppose $v\in H({\mathcal I})$ and $w\in H({\mathcal I'})$ are equal in the dyadic limit. Then $\rho_{\mathcal J}(v)=\rho_{\mathcal J'}(w)$
for any appropriate choices of $\mathcal J$  and $\mathcal J'$ in the dyadic limit.
\end{corollary}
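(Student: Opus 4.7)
The plan is to reduce everything to a single partition that simultaneously refines $\mathcal I$, $\mathcal I'$, and a witness to the equality $v=w$ in the dyadic limit, and is moreover good for $g$. Then the proposition just proved does the rest.

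More precisely, I would proceed in four steps. First, unwind the hypothesis: by the description of the dyadic limit following Definition \ref{hilbert}, $v=w$ in $\mathcal V$ means there exists $\mathcal L \in \ds$ with $\mathcal I \lesssim \mathcal L$ and $\mathcal I' \lesssim \mathcal L$ such that $T^{\mathcal I}_{\mathcal L}(v) = T^{\mathcal I'}_{\mathcal L}(w)$ in $H(\mathcal L)$. Second, since $\ds$ is directed and since for any $g \in F$ we can refine any partition to one that is good for $g$ (combining the existence of some partition good for $g$ noted before Lemma \ref{main} with the fact that a common refinement of two elements of $\ds$ is again in $\ds$), choose $\mathcal M \in \ds$ with $\mathcal M \gtrsim \mathcal J$, $\mathcal M \gtrsim \mathcal J'$, $\mathcal M \gtrsim \mathcal L$, and $\mathcal M$ good for $g$.

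Third, apply Proposition \ref{independence} twice: since both $\mathcal J$ and $\mathcal M$ are refinements of $\mathcal I$ good for $g$, we have $\rho_{\mathcal J}(v) = \rho_{\mathcal M}(v)$ in the dyadic limit, and similarly $\rho_{\mathcal J'}(w) = \rho_{\mathcal M}(w)$. So it suffices to show $\rho_{\mathcal M}(v) = \rho_{\mathcal M}(w)$, i.e.\ $g_{\mathcal M}\circ T^{\mathcal I}_{\mathcal M}(v) = g_{\mathcal M}\circ T^{\mathcal I'}_{\mathcal M}(w)$. Fourth, use functoriality of the directed system $\mathcal I \mapsto H(\mathcal I)$: since stacking of the forests $F_{\mathcal J}^{\mathcal I}$ corresponds to composition in $Cat(\mathcal P)$, we get $T^{\mathcal I}_{\mathcal M} = T^{\mathcal L}_{\mathcal M}\circ T^{\mathcal I}_{\mathcal L}$ and $T^{\mathcal I'}_{\mathcal M} = T^{\mathcal L}_{\mathcal M}\circ T^{\mathcal I'}_{\mathcal L}$. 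Applying these to $v$ and $w$ respectively and using step one gives $T^{\mathcal I}_{\mathcal M}(v) = T^{\mathcal I'}_{\mathcal M}(w)$ already at the level of $H(\mathcal M)$, and then applying the morphism $g_{\mathcal M}$ of $Cat(\mathcal P)$ to both sides yields the desired equality $\rho_{\mathcal M}(v) = \rho_{\mathcal M}(w)$ in $H(g(\mathcal M))$.

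There is no real obstacle here beyond bookkeeping; the only point that requires minor care is the existence of $\mathcal M$ good for $g$ above $\mathcal J$, $\mathcal J'$, and $\mathcal L$, but this is immediate from directedness of $\ds$ together with the refinement remark before Lemma \ref{main}. Everything else is formal from Proposition \ref{independence} and the fact that the transition maps $T^{\mathcal I}_{\mathcal J}$ are injective functorial morphisms of $Cat(\mathcal P)$.
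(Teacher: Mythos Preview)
Your proof is correct and follows essentially the same approach as the paper: pick a common refinement that witnesses $v=w$ and is good for $g$, then invoke Proposition \ref{independence} to replace $\mathcal J$ and $\mathcal J'$ by that common refinement. The paper's version is terser---it simply refines the witness $\mathcal K$ for $v=w$ to be good for $g$ and applies the proposition directly, without the extra bookkeeping of $\mathcal M \gtrsim \mathcal J,\mathcal J'$---but the content is the same. One small remark: the existence of your $\mathcal M$ good for $g$ follows most directly from Lemma \ref{main} (any refinement of a good partition is good) together with the fact that $\mathcal J$ is already good for $g$, rather than from the existence remark you cite.
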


\begin{proof}
By definition there is a $\mathcal K \in \ds$ with $T_{\mathcal K}^{\mathcal I}(v)=T_{\mathcal K}^{\mathcal I'}(w)$. Clearly we may also assume that
$\mathcal K$ is good for $g$ so by what we have just seen we may use $\mathcal K$ as both $\mathcal J$ and $\mathcal J'$ to calculate 
$\rho_{\mathcal J}(v)$ and $\rho_{\mathcal J'}(w)$.
\end{proof}
\begin{definition} For $g\in F$ and $v$ in the dyadic limit space $\mathcal V=\lim_{\mathcal I \in \ds} H_(\mathcal I)$  we define $\pi(g)(v)=\pi_{V,R}(g)(v)$ by
$\pi(g)(v)= \rho_{\mathcal J}(w)$ for some choice of $w\in H({\mathcal I})$ for some $\mathcal I\in \ds$ representing $w$ in the dyadic limit.

\end{definition}

\begin{proposition} The map $g\rightarrow \pi(g)$ defined above is a group representation which preserves the inner product
if $\mathcal P$ is a subfactor planar algebra and hence extends to a unitary representation of $F$ on $\mathcal H$.
\end{proposition}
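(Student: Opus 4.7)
The plan is to verify in order: (a) $\pi(e)=\mathrm{id}$; (b) the homomorphism property $\pi(gh)=\pi(g)\pi(h)$; and (c) in the subfactor setting, that each $\pi(g)$ preserves the pre-Hilbert space inner product on $\mathcal{V}$, so that $\pi$ extends by continuity to a unitary representation on $\mathcal{H}_{R,V}$. Well-definedness of $\pi(g)$ on the dyadic limit has already been provided by the preceding corollary.

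For (a), when $g=e$ the tangle $g_{\mathcal{J}}$ is literally the straight-line identity tangle, so $\pi(e)(v)=T_{\mathcal{J}}^{\mathcal{I}}(v)$, which represents $v$ in the dyadic limit.

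For (b), given $g,h\in F$ and $v\in H(\mathcal{I})$, I would produce a single partition $\mathcal{J}\gtrsim \mathcal{I}$ that is simultaneously good for $h$, good for $gh$, and such that $h(\mathcal{J})$ is good for $g$. Such a $\mathcal{J}$ exists by repeatedly refining using Lemma \ref{main}, which says refinements of good partitions remain good: start from any partition $\gtrsim \mathcal{I}$ witnessing each of the three conditions separately, then take a common refinement. With this choice
\[ \pi(g)\pi(h)(v) = g_{h(\mathcal{J})}\circ h_{\mathcal{J}}\circ T_{\mathcal{J}}^{\mathcal{I}}(v), \qquad \pi(gh)(v) = (gh)_{\mathcal{J}}\circ T_{\mathcal{J}}^{\mathcal{I}}(v). \]
The stacked tangle $g_{h(\mathcal{J})}\circ h_{\mathcal{J}}$ is a system of piecewise-linear strings in a rectangle from $E(\mathcal{J})$ at the top to $E(gh(\mathcal{J}))$ at the bottom, bent at the intermediate boundary $E(h(\mathcal{J}))$. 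Straightening each string gives an isotopy in $Cat(\mathcal{P})$ to $(gh)_{\mathcal{J}}$, so the two morphisms agree and multiplicativity follows.

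For (c), the normalization condition in Definition \ref{normalized} has already been used to make each $T_{\mathcal{J}}^{\mathcal{I}}$ isometric, so it suffices to show each $g_{\mathcal{J}}$ is unitary from $H(\mathcal{J})$ to $H(g(\mathcal{J}))$. But both spaces equal $V_n$ with $n=|E(\mathcal{J})|=|E(g(\mathcal{J}))|$ as inner-product spaces, and the tangle $g_{\mathcal{J}}$ is a noncrossing system of straight lines in an order-preserving correspondence between the $n$ boundary points at top and bottom. Such a tangle is isotopic to the identity rectangular tangle of $Cat(\mathcal{P})$ and therefore acts as the identity operator on $V_n$. Combined with the isometry of $T_{\mathcal{J}}^{\mathcal{I}}$, this gives $\|\pi(g)(v)\|=\|v\|$ for every representative $v$, and $\pi$ extends by density to a unitary representation of $F$ on $\mathcal{H}_{R,V}$.

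The main technical step will be the isotopy in (b): it is of the same flavour as the sliding argument in Lemma \ref{main}, but now one must track three separate partitions $\mathcal{J}$, $h(\mathcal{J})$, $gh(\mathcal{J})$ and verify that the straightening can be performed globally in $Cat(\mathcal{P})$ with boundary points fixed. Since Lemma \ref{main} already handles the single-slide analogue cleanly, I expect no essential new obstacle beyond bookkeeping.
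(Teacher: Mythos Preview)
Your argument is correct and follows essentially the same route as the paper's proof: the key observation in both is that the composite tangle $g_{h(\mathcal J)}\circ h_{\mathcal J}$ is isotopic to $(gh)_{\mathcal J}$, and unitarity comes from the fact that $g_{\mathcal J}$ is (isotopic to) the identity tangle on $V_n$ while the $T^{\mathcal I}_{\mathcal J}$ are isometries by Definition~\ref{normalized}. One small streamlining available to you: the paper notes that if $\mathcal J$ is good for $h$ and $h(\mathcal J)$ is good for $g$, then $\mathcal J$ is automatically good for $gh$ (since $gh(\mathcal J)=g(h(\mathcal J))\in\ds$), so you need not impose ``good for $gh$'' as a separate condition when building your common refinement.
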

\begin{proof} Unitarity follows immediately from \ref{normalized}. To show $\pi(g)\pi(h)=\pi(gh)$ observe first that for any  $\mathcal J$ which is
good for $h$ and for which $h(\mathcal J)$ is good for $g$, then $\mathcal J$ is good for $gh$ and
 it is clear that the composition  tangle $g_{h(\mathcal I)}\circ h_{\mathcal I}$ is isotopic
to $(g\circ h)_{\mathcal I}$. But to define $\pi(gh)(v)$ take $v\in H(\mathcal I)$ and choose $\mathcal J\gtrsim \mathcal I$ so that it is good for $h$ and $h(\mathcal J)$ is 
good for $g$. Then
 $$\pi(g)\pi(h)(v)=g_{h(\mathcal J)}(h_{\mathcal J}\circ T_{\mathcal J}^{\mathcal I}(v))=(g\circ h)_{\mathcal J}\circ T_{\mathcal J}^{\mathcal I}(v)=\pi(gh)(v)$$
\end{proof}

\subsubsection{Projective unitary representations of Thompson's group $T$ from affine $\mathcal P$-modules.}

 Suppose we are also given an affine
representation $V=\{V_{n,\pm}\}$ of $P$ with lowest weight vector $\psi\in V_{\ell,\pm}$.Let us use $P_n$ and $V_n$  to denote $P_{n,+}$ and $V_{n,+}$ respectively.

\vskip 20pt
 \begin{definition} Let $\widetilde T$ be the group of homeomorphisms $g$ of $\mathbb R$ with $g(x+1)=g(x)+1$ which are
 piecewise affine, differentiable on intervals with dyadic rational endpoints  and with slopes in $2^{\mathbb Z}$.
 \end{definition}

The map $g(x)\mapsto g(x) \mod 1$ defines a group homomorphism from $\widetilde T$ to Thompson's group $T$ as defined in \cite{CFP}. The kernel
is the group $\mathbb Z$ acting by translations by integers on $\mathbb R$ (which become rotations by multiples of $2\pi$ on
the circle) and the extension $\mathbb Z\rightarrow \widetilde T\rightarrow T$ is central
and not split. Another way to think of $\widetilde T$ is as periodic piecewise linear foliations of the strip $\mathbb R\times [0,1]$ with (straight line)
leaves connecting $(x,0)$ to $(g(x),1)$. By periodicity such a foliation passes to a foliation of the annulus which is the quotient of the
strip by $\mathbb Z$.

Given a normalized element $R\in P_2$ we will define a Hilbert space $\mathcal H=H_{V,R}$ which will carry a  unitary representation $\rho_{V,R}$ of  $\ct$ for which $\mathbb Z$ acts by scalars.
We proceed just as for $F$ but with the necessary modifications. Fortunately the directed set remains the same, only the standard dyadic partitions are
now thought of as partitions of $S^1$, or periodic dyadic partitions of $\mathbb R$. 

Given $\mathcal I\lesssim\mathcal J$ in $\ds$ we define an affine labelled tangle in almost the same way as just before definition \ref{blowup}.
Fix $0<r<R \in \mathbb R$ and circles $r\{z:|z|=1\}$ and $R\{z:|z|=1\}$ as the boundaries of the affine tangle. 
Suppose $I=[a,b]\in \mathcal I$ and that $I$ is the union of standard dyadic intervals $J\in S_I$ for $J\in \mathcal J$. Then join
$re^{2\pi i a}$ to  $Re^{2\pi i a}$ and $re^{2\pi i b}$ to $Re^{2\pi i b}$ by radial  straight line strings.
The region in between these two strings and the inner and outer circles is then isotopic to a rectangle which we fill
with exactly the same pattern as in the definition of $T_{\mathcal J}^{\mathcal I}$ in definition \ref{blowup}.
Applying this procedure to all the intervals of $\mathcal I$ we obtain an annular tangle which we again call $T_{\mathcal J}^{\mathcal I}$.
We illustrate below with the same $\mathcal I\lesssim\mathcal J$ as in \ref{blowup}.\\

\hpic {iota0} {2in}

Here the crossings between the strings need to be blown up to discs containing $R$, as we have done before.

Note that the affine tangle $T_{\mathcal J}^{\mathcal I}$ will always have a straight line string joining $r$ to $R$.

\begin{definition} \label{annularembeddings}Given $\mathcal P$ and an affine representation $V=(V_n)$ of it, and an element $R\in P_2$ as above we define the directed system of vector spaces 
$\mathcal I\mapsto  K(\mathcal I)$ for $\mathcal I\in \ds$ by $K( \mathcal I)=V_{|E(\mathcal I)|+1}$ and for 
$\mathcal I \lesssim \mathcal J$ in $\ds$ we use the tangle $T_{\mathcal J}^{\mathcal I}$ to define a map from 
$K( \mathcal I)$ to $K( \mathcal J)$ as required by the definition of a directed system.
 \end{definition}

(The reason for the $+1$ in $E(\mathcal I)+1$ is that the points $0$ and $1$ in $[0,1]$ now play a role but constitute exactly
one more point on $S^1$.)

Note further that an affine representation determines a rectangular one by restriction and that our directed system is the \emph{same} 
as that obtained by restricting $V_n$ to the rectangular category. In particular the trivial representation of the affine TL category
has 1-dimensional $V_2$ spanned by the image of the empty diagram (``vacuum vector'') in $V_0$ under a tangle with a single string joining
the boundary points. When restricting from affine to rectangular, this is the vector $\xi$ which we have used above in definition
\ref{xi}. We will persist in calling it $\xi$ in the affine context.

\begin{definition}\label{hilbertaffine} The direct limit vector space $\mathcal V=\lim_{\mathcal I \in \ds} K_{\mathcal I}$ is called the \emph{dyadic limit}
of $V$. If $\mathcal P$ is a subfactor planar  and the affine representations are Hilbert, $\mathcal V$ becomes a pre-Hilbert space since all the $T_{\mathcal J}^{\mathcal I}$
define isometries. Its Hilbert space completion is called the \emph{dyadic Hilbert space} $\mathcal K_{R,V}$ of $V$.
\end{definition}

Construction of the unitary representations  $\rho_{V,R}$ of  $\ct$ proceeds as for $F$, with the necessary modifications.
 
 As before we say that an element $\mathcal I \in \ds$ is good for $h\in \ct$ if 
$\{h(I) \mod 1:I\in \mathcal I\}$ forms a standard dyadic partition of $[0,1]$ and we write $h(\mathcal I)\in \ds$ for this partition.
To see that such an $\mathcal I$ exists,
 suppose that $h$  projects to an element $g\in T$. 
By \cite{CFP} $g$ is uniquely represented by a certain pair of bifurcating
trees representing standard dyadic partitions $\mathcal I$ and $\mathcal J$ of $[0,1]$, the difference from $F$ being that the  interval beginning with $0$ 
in $\mathcal I$  may map
to an interval in $\mathcal J$ which does not begin with $0$. Since $h$ is a lifting of $g$, $\{h(I) \mod 1:I\in \mathcal I\}$
is the same as $\mathcal J$.

If $\mathcal I$ is good for $h$ we define the affine tangle $h_{\mathcal I}$ to be the tangle  obtained as follows:\\
First connect $(0,0)$ to $(h(0),1)$ and all the points $(p,0)$ to $(h(p),1)$ for $p\in E(\mathcal I)$  by straight lines in $\mathbb R\times [0,1]$.  
Repeat this for all horizontal integer translates to obtain a periodic pattern of straight line connections. Then take the quotient
under translations by $\mathbb Z$ to obtain the affine tangle $h_{\mathcal I}$. We illustrate for a lifting $\hat C$ to $\ct$ of the well known
element $C$ (see \cite{CFP}) of $T$. $C$, as a map on $[0,1]$ identified with the circle, is defined by its standard dyadic partition $\{I,J,K\}$
with $I=[0,1/2], J=[1/2,3/4],K=[3/4,1]$ with $C(I)=K,C(J)=I$ and $C(K)=J$. The picture below gives the periodic pattern for the
lifting of $C$ with $\hat{C}(0)=-1/4$, followed by the affine tangle it defines.

\hpic {lift} {0.8in}  \hpic {affineversion} {1.5in}

 where we have indicated the end points of the standard dyadic intervals with small circles but no such circles for the midpoints.
 
 We assert that lemma \ref{main}  is true in this context exactly as stated,  noting simply that tangle now means affine tangle.
 The proof is also the same as lemma \ref{main} if one replaces the pictures by periodic pictures in the strip $\mathbb R \times [0,1]$.
 The sliding of the patterns of $R$'s from bottom to top and required rearrangement of points can be done in a $\mathbb Z$
 periodic way so that it passes to the annulus.
 
 Lemma \ref{main} is the only ingredient for the subsequent results that gave the existence of representations of $F$.
 
 Thus we deduce the following:
 
 \begin{theorem} Suppose we are given an affine representation $V=(V_n)$ of $\mathcal P$ and a normalized element $R\in P_2$. Then
 for $v\in K (\mathcal I)$ and $h\in \ct$ choose $\mathcal J\in \ds$ with $\mathcal I\lesssim \mathcal J$  which is good for $h$ and
 define $$\rho_h(v) = h_{\mathcal J}\circ T_{\mathcal J}^{\mathcal I}(v) \in  K(h(\mathcal J)).$$
 Then in the dyadic limit $\rho_h(v)$ is independent of any choices made and $h\mapsto \rho_h$ defines a representation of
 $\ct$ on the direct limit, which in the subfactor case extends to a unitary representation of the Hilbert space $\mathcal K$.
 \end{theorem}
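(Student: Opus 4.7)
The plan is to follow the $F$--case argument essentially verbatim, using the assertion already made just before the theorem that Lemma \ref{main} carries over to the affine setting via $\mathbb Z$--periodic pictures in the strip $\mathbb R\times[0,1]$ descending to the annulus. The essential new features are that tangles now carry a distinguished radial string from the inner to the outer circle, and that the group is the central extension $\widetilde T$ rather than $F$. I would begin by recording the basic facts about good partitions: given $h\in\widetilde T$, the projected element $g\in T$ has a standard dyadic tree pair $(\mathcal I,\mathcal J)$ as in \cite{CFP}, and $\{h(I)\bmod 1:I\in\mathcal I\}=\mathcal J$, so $\mathcal I$ is good for $h$; and if $\mathcal I$ is good for $h$ and $\mathcal I\lesssim\mathcal K$, then $\mathcal K$ is automatically good for $h$ because on each $I\in\mathcal I$ the map $h$ is affine with slope a power of $2$, sending standard dyadic subintervals to standard dyadic subintervals.

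Next I would prove the affine analog of Lemma \ref{main}: for $\mathcal J$ good for $h$ with $\mathcal J\lesssim\mathcal K$, the labelled affine tangles $h_{\mathcal K}\circ T_{\mathcal K}^{\mathcal J}$ and $T_{h(\mathcal K)}^{h(\mathcal J)}\circ h_{\mathcal J}$ are planar isotopic rel inner and outer circles. Following the hint, I would lift the whole configuration to the periodic universal cover $\mathbb R\times[0,1]$, apply interval-by-interval the same tree-sliding, rearrangement, and stretch/squeeze isotopy used to prove Lemma \ref{main}, and insist that the entire isotopy be carried out $\mathbb Z$--equivariantly so that it descends to the annulus. The distinguished radial string attached to the basepoint is respected because the lift of $h$ is $\mathbb Z$--equivariant and sends the fiducial vertical line to a straight line with integer horizontal displacement, which is a straight-line component of $h_{\mathcal J}$ after quotienting.

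Once that lemma is in hand, the rest of the $F$--proof goes through mechanically. Independence of the choice of $\mathcal J$ (the analog of Proposition \ref{independence}) follows by passing to any common dyadic refinement $\mathcal K$ of two candidates $\mathcal J,\mathcal J'$, observing that $\mathcal K$ is automatically good for $h$, and applying the affine lemma together with the directed-system property to conclude both $\rho_{\mathcal J}(v)$ and $\rho_{\mathcal J'}(v)$ equal $h_{\mathcal K}\circ T_{\mathcal K}^{\mathcal I}(v)$ in the dyadic limit. Independence of the representative of $v$ in the dyadic limit is then an immediate corollary, exactly as in the rectangular case. Multiplicativity $\rho_{gh}=\rho_g\rho_h$ is obtained by first refining $\mathcal J$ so that it is good for $h$ and $h(\mathcal J)$ is good for $g$, and then noting that $g_{h(\mathcal J)}\circ h_{\mathcal J}$ and $(gh)_{\mathcal J}$ are composed entirely of straight lines which agree up to affine planar isotopy. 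Unitarity in the subfactor case is then automatic: the $T_{\mathcal J}^{\mathcal I}$ are isometric by the normalization condition of Definition \ref{normalized}, and each $h_{\mathcal J}$ is a crossing-free bijection of boundary points implemented by straight lines carrying no $R$--insertions, hence unitary; extension from the pre-Hilbert direct limit to $\mathcal K$ is by continuity.

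The main obstacle I expect is exactly the $\mathbb Z$--equivariance in the affine version of Lemma \ref{main}. One must arrange the sliding of the $R$--labelled subtrees across the strip, the accompanying rearrangement of the midpoint/endpoint families $M(\mathcal K),E(\mathcal K)$, and the horizontal stretch/squeeze turning the intervals of $\mathcal J$ into those of $h(\mathcal K)$ to all commute with translation by $1$, so that the induced isotopy descends cleanly to the annular cobordism while preserving the basepoint radial string. The $\widetilde T$ versus $T$ distinction is precisely what allows this: working with the lift $h\in\widetilde T$ gives a canonical periodic straight-line pattern in the strip, so the basepoint string is not ambiguous; the resulting failure of the central $\mathbb Z\subset\widetilde T$ to act trivially in general is what accounts for the eigenvalue of the affine rotation tangle mentioned in Section \ref{planar}, and no additional work is needed beyond verifying multiplicativity on all of $\widetilde T$.
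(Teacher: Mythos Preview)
Your proposal is correct and follows essentially the same approach as the paper. The paper's proof consists precisely of the assertion (made just before the theorem) that Lemma \ref{main} holds in the affine setting via $\mathbb Z$--periodic isotopies in the strip passing to the annulus, together with the remark that this lemma is the only nontrivial ingredient needed to rerun the $F$--case arguments (Proposition \ref{independence}, its corollary, and the multiplicativity/unitarity proposition); you have spelled out exactly these steps in the order the paper intends.
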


 \begin{remark} \rm{It can happen that for an affine representation the rotation of $2\pi$ acts as the identity. These representations are
 called annular and the ensuing representations on the dyadic limit pass to the quotient $T$ of $\ct$. This is in particular the case of
 the trivial representation.}
 \end{remark}

The central extension for $\ct$ is canonically split when restricted to the geometric inclusions of $F$ inside $T$. Thus we can talk of the restriction of $\rho_{V,R}$ to $F$ as a representation of $F$. We leave it to the reader to check that, in the case of the embedding of $F$ inside $T$ as homeomorphisms of the
circle fixing the point $1\in \mathbb C$, the restriction of $\rho_{V,R}$ is the representation of $F$ we would get by restricting $V$ to a rectangular 
representation of $\mathcal P$.

\section{Calculation of coefficients}
\subsection{Representation of elements of $F$ as pairs of rooted planar trees.}\label{reps}
As in \cite{CFP}, any element of $F$ is given by a  pair of bifurcating trees $T_+$ and $T_-$ as below. Our convention will be that
each standard dyadic interval represented by  a leaf of the top tree $T_+$ is sent by the Thompson group element 
to the interval represented by the leaf on the tree $T_-$ to which it is connected.

\begin{definition}\label{thompsonfromtrees}
Given $T_+$ and $T_-$ as above the element of $F$ will be called $g(T_+,T_-)$.
\end{definition}

The element $g$ defines $T_+$ and $T_-$ provided there are no cancelling ``carets''-see \cite{CFP}.

It will be convenient to arrange the two bifurcating trees in $\mathbb R^2$ so that their leaves are the points
$(1/2,0),(3/2,0),(5/2,0),\cdots ((2N-1)/2,0)$,  with all of the edges being straight line
segments sloping either up from left to right or down from left to right. $T_+$ is in the upper half plane
and $T_-$ is in the lower half plane.
Then each region between the edges of each tree contains exactly one point in the set $\{(1,0),(2,0),...,(N,0)\}$.
Let us form a new planar  graph $\Gamma$  given from the two trees.
The  vertices of $\Gamma$ are $\{(0,0),(1,0),(2,0),...,(N,0)\}$ and the edges are given by curves passing once transversally through
certain edges of the top and bottom trees. From the top tree use all the  edges sloping up from left to right (which we 
call WN edges) and from the bottom tree use all the 
edges sloping down from left to right (which we cal WS edges).
The figure below illustrates the formation of the graph $\Gamma$ for a pair of bifurcating trees with 5 leaves. We have numbered the vertices of $\Gamma$ with
their $x$ coordinates, and we will henceforth use that numbering to label those vertices. \\

\qquad \qquad \vpic{treetotreenew} {2.5in}

To be quite clear the above element of $F$ is linear on each of the following five standard intervals, which
it maps to the next five in the given order:

$\{[0,\frac{1}{2}],[\frac{1}{2},\frac{3}{4}],[\frac{3}{4},\frac{13}{16}],[\frac{13}{16},\frac{7}{8}],[\frac{7}{8},1]\}\rightarrow
\{[0,\frac{1}{8}],[\frac{1}{8},\frac{1}{4}],[\frac{1}{4},\frac{1}{2}],[\frac{1}{2},\frac{3}{4}],[\frac{3}{4},1]\}$
\begin{definition} \label{treetotree1}Given $T_+$ and $T_-$ as above , the planar graph $\Gamma$ defined above is called the planar graph of $T_+,T_-$, written $\Gamma(T_+,T_-)$ or $\Gamma(g)$ if there are no cancelling carets so that the data of the two trees is the same as the data $g\in F$.
\end{definition}

Observe that the procedure for constructing $\Gamma$ actually constructs a rooted tree $\Gamma_\pm(T_\pm )$ with vertices $\{(0,0),(1,0),(2,0),...,(N,0)\}$ from a single
bifurcating tree $T_\pm$ either in the upper $(+)$  or lower $(-)$ half plane with leaves \\$(1/2,0),(3/2,0),(5/2,0),\cdots ((2N-1)/2,0)$.

Note that the graph $\Gamma (T_+,T_-)$ is also a pair of rooted planar trees, one in the lower half plane and one in the upper half plane having the
same root and the same leaves. But they
are not bifurcating in general, the valence of each vertex being unconstrained.

 Cancelling of carets between $T_+$ and $T_-$  corresponds to removal of  a 
two-valent vertex connected only to its neighbour,
and the edges connected to it. 

\begin{proposition}\label{conditions}
The graph $\Gamma$ formed above from a pair of bifurcating trees   consists of two trees, $\Gamma_+$  in the upper half plane 
and $\Gamma_-$  in the lower half plane,  having the following properties:\\
(0)The  vertices are $0,1,2,\cdots, N$.\\
(\romannumeral 1) Each vertex other than $0$  is connected to exactly one vertex to its left.\\
(\romannumeral 2) Each edge can be parametrized by a smooth curve $(x(t),y(t))$ for $0\leq t \leq 1$ with $x'(t)>0$ and 
either $y(t)>0$ for $0<t<1$ or $y(t)<0$ for $0<t<1$.
\end{proposition}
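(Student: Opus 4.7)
The plan is to verify the three properties in turn, with (i) carrying the main substance. Property (0) is immediate from the specification of the vertex set of $\Gamma$ in the construction preceding the proposition. Property (ii) is essentially geometric: each edge of $\Gamma_+$ is drawn as a smooth arc crossing a single WN edge of $T_+$, and since each WN edge is itself a straight-line segment with $x'>0$ and $y'>0$ lying in the open upper half-plane off its endpoints on the $x$-axis, the transverse arc may likewise be chosen smooth with $x'(t)>0$ throughout and $y(t)>0$ strictly between its endpoints. The argument for WS edges of $T_-$ and the edges of $\Gamma_-$ is identical after reflection in the $x$-axis.

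The substantive content is property (i). I would work with $\Gamma_+$; the claim for $\Gamma_-$ then follows from the same argument applied after reflecting in the $x$-axis, which exchanges WN with WS and swaps the two half-planes. The strategy is to set up a bijection between the edges of $\Gamma_+$ and $\{1,2,\ldots,N\}$, sending each edge of $\Gamma_+$ to its right-hand (larger-indexed) endpoint; this would yield (i) at once, since vertex $0$ would then be the unique root with no leftward edge and every other vertex would receive exactly one. To produce the bijection I would analyze the planar face structure of the complement of $T_+$ in the closed upper half-plane: each integer vertex $(k,0)$ with $k\in\{1,\ldots,N\}$ lies in the interior of a unique face, as already noted in the paper, and each such face has a well-defined westernmost tree-edge boundary. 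The key claim is that this westernmost boundary must be a WN edge, since the bifurcating planar structure of $T_+$ forbids a WS edge from occupying this position (a WS edge on the west would force a right-child subtree to lie west of its parent, violating the planar left/right ordering).The transverse arc across that WN edge then supplies the unique edge of $\Gamma_+$ incident to $(k,0)$ from the left, connecting $(k,0)$ to the integer vertex inhabiting the face immediately to the west.

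I would make the analysis rigorous by induction on $N$, the number of leaves. The base case is verified by hand, and the inductive step traces the effect on both the WN-edge set and the face structure when a single caret is added to (or removed from) $T_+$: each caret contributes exactly one new integer vertex and one new WN edge matched bijectively under the correspondence. The principal obstacle is the combinatorial bookkeeping around the unbounded outside face and the vertex $(0,0)$: one must verify that $(0,0)$ receives no leftward edge while every other integer vertex receives exactly one, which depends delicately on the bifurcating planar structure of $T_+$ together with the specific convention used to draw its root relative to $(0,0)$.
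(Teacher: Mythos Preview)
Your proposal is correct, but it supplies vastly more detail than the paper does: the paper's entire proof is the single line ``This is obvious from the construction of $\Gamma$.'' In other words, the author treats all three properties as immediate read-offs from the way $\Gamma$ is drawn, and offers no separate argument for (i) at all.

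What you have written is therefore not a different route so much as an honest attempt to justify what the paper asserts without justification. Your face-structure analysis is the right picture: the region of the upper half plane containing $(k,0)$ is bounded on its left side by the path in $T_+$ from the leaf $(k-\tfrac12,0)$ up to the nearest common ancestor of the two neighbouring leaves, and that path consists of right-child edges (non-WN) capped by exactly one left-child edge (WN) at the top; crossing that single WN edge gives the unique leftward edge of $\Gamma_+$ at $(k,0)$. Your induction on carets is a clean way to package this. Your caution about the vertex $(0,0)$ and the drawing convention for the root is well founded --- the count only balances ($N$ WN edges for a tree on $N+1$ vertices) once one includes the root/trunk edge as WN, which the paper leaves implicit in its figure. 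So your proof is a legitimate fleshing-out of a claim the paper simply declares obvious; nothing is wrong with it, and nothing essential is missing beyond pinning down that root convention explicitly.
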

\begin{proof} This is obvious from the construction of $\Gamma$.
\end{proof}
 
 Graphs  of the from $\Gamma_\pm$ are obviously oriented so we may talk of the source and target of an edge.
 We will show below how to reconstruct the pair of bifurcating trees from a pair of rooted planar trees with vertices  satisfying the conditions
 of \ref{conditions}.
 
  This shows that $\Gamma(g)$ is an equally faithful way of representing elements of the Thompson group $F$.
 
 \begin{lemma}\label{catalan}
 Let $\Psi$ be a rooted tree in the  upper or lower half plane satisfying the conditions of proposition \ref{conditions}. Then there is a bifurcating
 tree  $T_\pm$ such that $\Psi=\Gamma_\pm(T_\pm)$.
 \end{lemma}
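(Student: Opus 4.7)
The plan is to proceed by induction on $N$, where $\Psi$ is a rooted tree on $\{0, 1, \ldots, N\}$. For the base case $N = 0$ the tree $\Psi$ is a single vertex, and I take $T_\pm$ to be the trivial single-leaf bifurcating tree, for which the construction of $\Gamma_\pm$ described before this lemma immediately returns this $\Psi$.

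For the inductive step, I would first identify the children of the root $0$ in $\Psi$: call them $v_1 < v_2 < \cdots < v_k$, and note that $v_1 = 1$ necessarily, since $0$ is the only admissible parent of $1$. The planarity condition (ii) combined with (i) forces the subtree $\Psi_i$ of $\Psi$ rooted at $v_i$ to consist of exactly the vertices $\{v_i, v_i+1, \ldots, v_{i+1}-1\}$, where $v_{k+1} := N+1$; for otherwise some edge of $\Psi_i$ would have to cross the arc $0 \to v_{i+1}$ in the relevant half plane. After shifting indices down by $v_i$, each $\Psi_i$ becomes a rooted tree on an initial segment of non-negative integers satisfying the same conditions on a strictly smaller vertex set, so the inductive hypothesis supplies a bifurcating tree $T_i$ with $\Gamma_\pm(T_i) = \Psi_i$. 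I would then assemble $T_\pm$ from the $T_i$ along a right vine: the root of $T_\pm$ has left child equal to the root of $T_1$ and right child a fresh spine vertex, whose left child is the root of $T_2$ and right child is the next spine vertex, and so on until the $(k{-}1)$th spine vertex has $T_{k-1}$ on the left and $T_k$ on the right. Arranging the $T_i$ in disjoint horizontal ranges in the appropriate half plane yields a bifurcating tree whose leaves sit at the standard half-integer positions.

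The hard part will be the verification that $\Gamma_\pm(T_\pm) = \Psi$. This reduces to tracing the face structure of the planar embedding of the assembled $T_\pm$: one must check that the faces of $T_\pm$ restrict, over the horizontal range occupied by each $T_i$, to the faces of $T_i$ given by the inductive bijection; that the WN (resp.\ WS) edges internal to each $T_i$ correspond under this restriction to the edges of $\Psi_i$; and that the new WN/WS edges supplied by the spine, together with the root edge, account for precisely the $k$ edges $0 \to v_1, \ldots, 0 \to v_k$ of $\Psi$ emanating from the root. This bookkeeping is essentially combinatorial but geometric in spirit, and it is where the bulk of the proof lies.
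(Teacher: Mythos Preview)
Your decompose-at-the-root strategy is a genuinely different route from the paper's. The paper does not recurse on the children of $0$ at all; instead it locates a \emph{minimal terminal} vertex $m$ of $\Psi$ (a leaf of $\Psi$ whose unique incoming edge comes from $m-1$), deletes it, applies the inductive hypothesis to obtain a bifurcating tree $T'$, and then grafts a single caret back onto $T'$, splitting into two cases according to whether $m-1$ has valence $2$ or $>2$. Your recursion is closer in spirit to the classical plane-tree/binary-tree correspondence, and when set up correctly it does yield an alternative proof.

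As written, however, the assembly step has a real gap. The case $k=1$ already fails to parse: your recipe produces no spine vertex, so $T_\pm$ collapses to $T_1$, whence $\Gamma_\pm(T_\pm)=\Psi_1$, not $\Psi$. More generally the assembled tree is one leaf short: the $\Psi_i$ cover only the vertices $1,\dots,N$, so the $T_i$ together contribute $N$ leaves, while the map $\Gamma_\pm$ needs $N+1$ leaves on the bifurcating side to produce the $N+1$ vertices of $\Psi$. Nor is it just a matter of inserting one more leaf somewhere: take $\Psi$ to be the star on $\{0,1,2\}$ with edges $0$--$1$ and $0$--$2$, so $k=2$ and $T_1=T_2$ are single leaves; your assembly gives a two-leaf caret, and if one simply adds a leftmost leaf one obtains the three-leaf right vine, whose $\Gamma_\pm$ is the path $0$--$1$--$2$, not the star. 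The assembly that actually inverts $\Gamma_\pm$ is a \emph{left} chain of $k$ spine vertices with $T_i$ hung on the \emph{right} of the $i$-th spine vertex from the bottom and a bare extra leaf at the far left (equivalently, the encoding ``right child $=$ last child in $\Psi$, left child $=$ previous sibling''). Once you make that correction, the bookkeeping you outline goes through and the argument is arguably cleaner than the paper's two-case local surgery; but the version you wrote down does not yet produce the right tree.
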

\begin{proof} Wolog we may assume everything is in the lower half plane.

We will work by induction on the number of leaves. So suppose we are given a $\Psi_-$ satisfying the conditions
of \ref{conditions}  with $N+1$ leaves.  Call a vertex of $\Psi_-$ 
\emph{terminal} if it is not the source of an edge. 


If $j$ is a terminal vertex then it is the target of a unique edge. The source of that
edge is $k$ for $k<j$. If $k=j-1$ we will call $j$ \emph{minimal terminal}. If $j$ fails to be minimal terminal then $j-1$ could, by
planarity, only be connected to the right to $j$, so $j-1$ is terminal. Continuing in this way we obtain a minimal terminal vertex $m$.
There are then two possibilities for $m-1$\\
\vskip 5pt
Case(1). Valence of $m-1$ is $2$. Then if $m$ is deleted $m-1$ becomes terminal and in a neighborhood of $m$ and $m-1$ $\Psi$ is as below:\\

\qquad \qquad \vpic{minimalterminal1} {2.5in}
 
 Removing $m$ and its edge the resulting  graph $\Psi'$  still satisfies the conditions of \ref{conditions} so there is by induction a $T'$
  with $\Psi'= \Gamma_-(T')$. Observe that the terminal vertex $m-1$ is necessarily in a caret of $T'$.   We may thus add WS edge to $T$ to re-insert
  the vertex $m$  and obtain the desired $T_-$:\\
  
  \qquad \qquad \vpic{minimalterminal11} {2.5in}

  where the solid edges are those of $T'$ and the dashed edges are the ones added to obtain $T_-$ and $\Psi$.
  
  Case(2) Valence of $m-1$ is $>2$. In this case there is an edge with source $m-1$ connecting it to a vertex $k$ with $k>m$. By planarity
  there must be such an edge connecting $m-1$ to $m+1$. The situation near $m$ is thus:\\
  
  \qquad \qquad \vpic{minimalterminal2} {2.5in}
  
  Removing $m$ and its edge the resulting  graph $\Psi'$  still satisfies the conditions of \ref{conditions} so there is by induction a $T'$
  with $\Psi'= \Gamma_-(T')$. There has to be a WS edge in $T$ between $m-1$ and $m+1$ so we may add a WN edge to $T'$ as 
  in the figure below re-insert $m$ and obtain the desired $T_-$:\\
  
  \qquad \qquad \vpic{minimalterminal21} {2.5in}

  where the solid edges are those of $T'$ and the dashed edges are the ones added to obtain $T_-$ and $\Psi$.
\end{proof}

We have essentially given a bijection between two sets counted by the Catalan numbers which thus almost certainly exists in 
the literature. The result is so important to our examples that we have supplied a detailed description of what is an algorithm to
obtain a Thompson group element from a pair of planar graphs with the same set of leaves and the same root.

\subsection{Representation of elements of $T$ as pairs of rooted planar trees.}\label{trees}

As shown in \cite{CFP}, an element $g$ of Thompson's group $T$ can be uniquely represented by a pair $T_\pm$ of bifurcating trees whose leaves
index the intervals in two elements of $\ds$. The only difference is that for $T$, $g$ may send the leftmost interval specified by $T_+$ to
the interval of any leaf  $\ell_0$ of $T_-$, the other intervals lining up in cyclic order. Thus the pictures corresponding to the one we have
drawn for $F$ should be drawn on a circle. This is most readily achieved by identifying the first leaf of $T_+$ with $\ell_0$ and placing
$T_-$ in the lower half plane with as many leaves as possible identified with those of $T_+$ in cyclic order. Then join the unattached
leaves with curves in the only planar way possible. We illustrate below with the same $T_\pm$ as in  the figure before definition \ref{treetotree1},
but with the third leaf from the left of $T_-$ being connected to the first of $T_+$:\\

\hpic {ttreetotree1} {0.6in} $\rightarrow$ \hpic {ttreetotree2new} {1in} $\rightarrow$ \hpic {ttreetotree3new} {1in}

\notetoself{turn this upside down to make it compatible with annular pictures}
Thus, exactly as for $F$, we may also replace the pair $T_\pm$ by a pair $\Gamma_\pm$ of planar rooted trees drawn in the plane
with  the same set of vertices on the $x$ axis and with the root of $\Gamma_-$ being the same as a marked vertex of $\Gamma_+$. In our example
above we get:\\

\hpic {ttreetotree4new} {2in}  

and after removing $T_\pm$ and cleaning up we get:\\

\hpic {tpair2new} {2in}  \\

where the circled vertex is the root of $\Gamma_-$ and the vertex of $\Gamma_+$ with which it is identified.

The pair $\Gamma_\pm$ should really be thought of as on the inside and outside of the circle obtained by wrapping the
$x$ axis around on itself from below. The positive $x$ direction then becomes the clockwise direction on the circle.

As for $F$, $T_\pm$ can be reconstructed from $\Gamma_\pm$ so we have a faithful way of representing elements of $T$.

\subsection{The coefficients  $\langle \pi_{\Xi, R}(g)\xi,\xi\rangle$ for $F$}\label{coeffs}

We want to explain the relevance of the graph $\Gamma(g)$ for $g\in F$. First we explain a well known correspondence
between planar graphs and four-valent planar graphs.
 Suppose we have a planar graph $\Gamma$ with vertices $V$ and edges $E$. Then we may form the \emph{medial} graph whose
 vertices are midpoints of the edges $E$ and whose edges are obtained by connecting the vertices to adjacent edges around all 
 the polygonal faces of $\Gamma$ thus:\
 
  \qquad  \hpic{planarwithmedial2} {2.5in}

One may shade the faces of the medial graph so that the unbounded face is unshaded. Then the vertices of $\Gamma$ are in the shaded
faces. This procedure gives a bijection between planar graphs and four-valent planar graphs (which are actually the same as generic projections
of smooth links in $\mathbb R^3$ onto $\mathbb R^2$). If $\Phi$ is the four-valent planar graph we call $\Gamma(\Phi)$ the 
graph $\Gamma$ and conversely given $\Gamma$ we call $\Phi(\Gamma)$ the medial graph.

The vertices of $\Gamma$ are the big black dots and the medial graph is in blue. We now want to blow up the crossings of the medial
graph and insert $R$  as we did in \ref{blowup}. For this we need to know where to put the $\$$ signs if $R$ is not invariant under the rotation
by $\pi$. To achieve this we need $\Gamma$ to be edge oriented. If it is we make the convention that the $\$$ sign always goes to the left. 
We also need to be able to insert $R^*$'s instead of $R$'s at the vertices of $\Phi(\Gamma)$ for which which we need a signing of the edges.
We illustrate below, for the previous example, the formation of the labelled 0-tangle from the graph above to which we have given an edge
orientation and an edge signing:\

 \hpic{planartomedial} {2in}

\begin{definition}If $R\in P_{2,+}$ is given we call the labelled tangle above  the \emph{$R$-tangle} $T_R(\Gamma)$ of the edge oriented and edge signed
planar graph $\Gamma$.
\end{definition}
We use the same notation if $R$ is rotationally invariant and $\Gamma$ is not edge-oriented and/or if $\Gamma$ is not edge-signed
and $R$ is self-adjoint .

Recall from definition \ref{xi} that $\Xi$ is the rectangular representation of the planar algebra generated by
$\xi$ in the regular representation of $\mathcal P$ (\ref{regular}).


\begin{theorem} \label{calculation} If $R\in P_{2,+}$ and $T_+$ and $T_-$ define an element $g\in F$,
$$\langle \pi_{\Xi,R}(g)\xi,\xi\rangle=Z(T_R(\Gamma(T_+,T_-)))$$
where the edges of $\Gamma(T_+,T_-)$ are oriented from left to right and given $+$ signs on the top and $-$ signs on the bottom. 

\end{theorem}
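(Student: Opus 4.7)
The plan is to unfold the definition of $\pi_{\Xi,R}(g)\xi$ into a closed labelled planar tangle and then planar-isotope that tangle into $T_R(\Gamma(T_+,T_-))$. Choose the good partition $\mathcal J = \mathcal I_{T_+}$ whose intervals are indexed by the leaves of $T_+$; then $g(\mathcal J) = \mathcal I_{T_-}$. By the construction of $\pi$ one has $\pi(g)\xi = g_{\mathcal J}\circ T_{\mathcal J}^{\{[0,1]\}}(\xi) \in H(g(\mathcal J))$, and in the same space $\xi$ itself is represented by $T_{g(\mathcal J)}^{\{[0,1]\}}(\xi)$. Hence $\langle \pi(g)\xi,\xi\rangle = Z(T)$ for the closed labelled tangle $T$ obtained by stacking $T_{\mathcal J}^{\{[0,1]\}}(\xi)$ above $g_{\mathcal J}$ above $\bigl(T_{g(\mathcal J)}^{\{[0,1]\}}(\xi)\bigr)^*$ and closing via the $P_0$-valued inner product of the regular representation (strings on the left).

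Next I would unpack $T$. Since $\xi$ is a single strand, the top block is drawn from $T_+$ by placing one copy of $R$ at each of its $N-1$ bifurcations and connecting the remaining marked points of each $R$ by vertical straight lines to the endpoints of the intervals of $\mathcal J$; symmetrically the bottom block is drawn from $T_-$ with $R^*$'s. In total $T$ contains $2(N-1)$ labelled boxes, exactly matching the number of edges of $\Gamma(T_+,T_-)$ and hence the number of crossings of the medial graph $\Phi(\Gamma)$, i.e.\ the number of boxes in $T_R(\Gamma)$. The relevant bijection sends each internal node $v$ of $T_+$ to the WN edge from $v$ to its left child, which is precisely an edge of $\Gamma_+$, and analogously each internal node of $T_-$ to its WS child-edge, an edge of $\Gamma_-$. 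Under this bijection the $\$ $ mark on top of the $R$ at $v$ lies on the left of the left-to-right oriented edge of $\Gamma_+$, matching the ``$\$ $-on-the-left'' convention of $T_R$, while the $R$/$R^*$ placement supplies exactly the $+$/$-$ sign decoration.

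For the isotopy, I would slide each $R$ sitting at a bifurcation $v$ of $T_+$ along its WN child-edge until it rests at the midpoint of the corresponding edge of $\Gamma_+$, where it plays the role of the medial crossing of $\Phi(\Gamma)$ on that edge: three of its four strings become medial arcs at the crossing, and the fourth string, originally running to the shared endpoint of the two subtree-intervals of $\mathcal J$, hooks up smoothly with the fourth string of an adjacent box to form a neighbouring medial arc. Mirror this for $R^*$'s in $T_-$. The Thompson straight lines in $g_{\mathcal J}$ are absorbed into the smooth medial arcs connecting upper and lower crossings, and the leftward inner-product closure becomes the unbounded face of $\Phi(\Gamma)$. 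The main obstacle is carrying out this isotopy globally, because the fourth strings are permuted by $g_{\mathcal J}$ before they pair off, so the local matching must be tracked coherently across the whole graph. The cleanest way to finish is by induction on $N$: both sides are invariant under adding a matched cancelling caret to $T_+$ and $T_-$ (on the left this inserts an $R R^*$ pair that collapses by the normalization of $R$, on the right it subdivides an edge of $\Gamma$ by a two-valent vertex whose contribution cancels in the same way), which reduces the identity to a small base case where the correspondence between boxes and edges is transparent.
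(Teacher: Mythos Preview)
Your setup is exactly the paper's: take $\mathcal J=\mathcal I_{T_+}$ as a good partition for $g$, realise $\langle\pi(g)\xi,\xi\rangle$ as the partition function of the closed labelled tangle obtained by stacking $T_{\mathcal J}^{\{[0,1]\}}(\xi)$, the straight-line tangle $g_{\mathcal J}$, and the adjoint block coming from $T_-$, then closing on the left. Your description of the bijection between the $R$-discs and the edges of $\Gamma(T_+,T_-)$, and of the local slide carrying each $R$ to the midpoint of its edge so that the four strings become the four medial arcs, is precisely what the paper's sequence of pictures encodes.

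The genuine gap is your inductive finish. A \emph{cancelling} caret is, by definition, one occurring at the same leaf of $T_+$ and $T_-$; adding or removing it does not change $g\in F$ at all, it only moves between tree-pair representatives of the \emph{same} Thompson element. So the invariance you note shows that both sides of the identity depend only on $g$ rather than on the chosen representative, but it gives no mechanism to pass from one element of $F$ to a simpler one. An arbitrary $g$ has a unique reduced pair $(T_+,T_-)$ with no cancelling carets and possibly many leaves; there is no further caret move available and no ``small base case'' to land on. The induction on $N$ therefore does not terminate.

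The paper does not look for an induction. It simply draws the stacked tangle for a generic $g$, shades it, and reads off the graph of shaded regions directly as $\Gamma(T_+,T_-)$ with the stated orientations and signs. The ``fourth strings'' you flagged (the vertical segments running to interval endpoints) are exactly what, after the permutation by $g_{\mathcal J}$ and the left closure, pair into the medial arcs joining upper to lower crossings; the closure string produces the edge at the root. If you want rigour beyond the pictures, the way forward is to carry out that global identification of shaded regions with vertices and of $R$-discs with edges all at once from the combinatorics of the stacked diagram, not to seek an inductive shortcut.
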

\begin{proof}
Given $T_+,T_-$, by definition the partition $\mathcal J$ defined by $T_-$ is good for $g(T_+,T_-)$. So if $\mathcal I$ is the
partition with just one interval, the labelled tangle
$T_{\mathcal J}^{\mathcal I}$ may be composed with $g_{\mathcal J}$ to give  the tangle $g_{\mathcal J}\circ T _{\mathcal J}^{\mathcal I}$
which gives $\pi(g)(\xi)$ when applied to $\xi$. 
 It consists of a rectangular tangle 
labelled with $R$'s with one boundary point at the bottom and strings  with crossings blown up to include $R$'s. With the strings
connecting the boundary point at the bottom and the mid points and end points of the intervals of $g(\mathcal J)$ at the top. We illustrate below
with the element $g$ which we gave explicitly in section \ref{reps}:\\

\qquad  \hpic{gxinew} {2in}

Now, by the way $g$ is defined from $T_-$ and $T_+$, the intervals of $g(\mathcal J)$ are the intervals defined by $T_-$ so that
$T_{\mathcal J}^{g(\mathcal I)}(\xi)$ is $\xi$ viewed as an element of $H_{g(\mathcal J)}$. So to obtain $\langle \pi_{\Xi,R}(g)\xi,\xi\rangle$
we need simply attach  $(T_{\mathcal J}^{\mathcal I})^*$ to the bottom of $g_{\mathcal J}\circ T _{\mathcal J}^{\mathcal I}$
and join the top boundary point to the bottom, to the left of the rectangles so that the unbounded region is unshaded. 
We illustrate below, continuing with the example from section \ref{reps}: \\

\qquad \hpic{gxixi} {2in}

Cleaning up and adding the \$ data and * data, we obtain:

\qquad \hpic{gxixi2} {2in}

Finally we extract the edge-signed edge-oriented graph of shaded regions (unbounded region unshaded) which we 
recognise as  $\Gamma(T_+,T_-)$:

\qquad  \hpic{gxixi3} {2in}



\end{proof}
\subsection{The coefficients  $\langle \pi_{\Xi, R}(g)\xi,\xi\rangle$ for $T$}\label{tcoeffs}

All the remarks above pertaining to $\langle \pi_{\Xi, R}(g)\xi,\xi\rangle$ for $T$ apply equally
to $T$ where we agree to  call $\Xi$ the trivial affine representation of $\mathcal P$ with vector $\xi$ as
above. In particular we constructed in subsection \ref{reps} a planar graph consisting of $\Gamma_\pm$ and
if we put negative signs on the edges of $\Gamma_-$ and positive ones on the edges of $\Gamma_+$, and orient
the edges in the clockwise direction we obtain an edge oriented, edge-signed planar graph $\Gamma(T_+,T_-)$  from 
a pair of bifurcating trees (with a marked leaf for $T_+$) defining the element $g\in F$. 

The calculation of theorem \ref{calculation} is true for this $\Gamma(T_+,T_-)$ for $g\in T$ and the edge orientations being clockwise. We illustrate
below for the element $g$ we have been using. Note that
$g$ is defined by the following two elements $\mathcal I$ and $g(\mathcal I)=\mathcal J$ which we give below, with the image of
each standerd dyadic interval under $g$ placed directly below that interval:

$\begin{aligned}  \mathcal I = \{&[0,\frac{1}{2}],[\frac{1}{2},\frac{3}{4}],[\frac{3}{4},\frac{13}{16}],&[\frac{13}{16},\frac{7}{8}],[\frac{7}{8},1]\}\\
                       \mathcal J= \{&[\frac{1}{4},\frac{1}{2}],[\frac{1}{2},\frac{3}{4}],[\frac{3}{4},1],&[0,\frac{1}{8}],[\frac{1}{8},\frac{1}{4}]\} 
                       \end{aligned}$
                       
 We present a sequence of 5 pictures taking us from the definition of $\langle \pi_{\Xi, R}(g)\xi,\xi\rangle$
to the edge-oriented edge signed graph above.\\
Picture 1:the element $\xi$ viewed in $K(\mathcal I)$,  \hpic {txi1} {2in}  \\
Picture 2:  $\pi_{\Xi, R}(g)\xi$:   \hpic {txi2} {2.6in}  \\
Picture 3:  $\xi$ viewed in $K(\mathcal J)$: \hpic {txi3} {2.6in} \\
Picture 4:  $\langle \pi_{\Xi, R}(g)\xi,\xi\rangle$:\\  \hpic {txi5} {2.6in}  \\

Where we note that the blown up crossings in the right part of the picture contain
$R^*$ and not $R$.\\
Picture 5:  The graph of shaded regions.:\\ \hpic {txi7} {2.6in} 

where we have drawn the picture so that the trees inside and outside the circle are clearly visible as the trees at the 
top and bottom of the picture at the end of section \ref{trees}

\section{Special choices of $R$}
\subsection{Dumb choices.}
The simplest possible choices of $R$ are  the tangles below which give equivalent results up to symmetry:\\

\hspace{0.5in} \hpic {dumb} {0.8in} \\

Viewed in $H(\mathcal I)$, $\xi$ is, for the first choice of $R$, just a TL tangle where the boundary point at the bottom
is connected to the extreme right point (for $\mathcal I$) at the top, and all other boundary points at the top are connected
to their nearest neighbors (left or right depending on parity), as below:

\hspace{1in} \hpic {trivial} {1.5in}

 This property is invariant under $F$ so we see that 
the linear span of $g\xi$ is one dimensional and defines the trivial representation of $F$.

Other orbits of the action of $F$ on tangles are easy to analyze also. For instance if we take the 
rectangular representation of  $TL$ definite by a minimal projection with 6 boundary points altogether
we see that the image $TL$ element under a $g\in F$ is entirely determined by the image of the point $\frac{1}{2}$
under $g$, the other two boundary points being automatically connected to the extreme left point at the bottom
and the point to the right (at whatever scale) of $g(\frac{1}{2})$. 
These vectors, for $g,h\in F$, are orthogonal iff $g(\frac{1}{2})\neq h(\frac{1}{2})$ so we see that the representation
is just that on $\ell^2(\mbox{dyadic rationals})$. All actions on finite subsets of the dyadics can be obtained from 
minimal projections in higher $TL_n$'s. In particular all these representations are irreducible.

The situation for $T$ is similar. The vacuum vector is fixed by $T$ so defines the trivial representation.
The lowest weight 2 representation (spanned under $T$ by a one dimensional $V_2$ space, with $V_0=0$)
is the quasiregular representation on $\ell^2(T/F)$, and similarly for higher lowest weights. For $\ct$, one obtains
the same representations except that any character of the the center of $\ct$ may be induced (remember that
the central extension $\ct$ is split when restricted to $F$).

Note that choosing $R$ to be a normalized linear combination of the two choices above and taking the limit 
as one coefficient tends to zero we obtain the trivial representation as the limit along a curve of our representations
of $F$ and $T$.

\subsection {Chromatic choices.}\label{chromatic}

It is convenient in this section to use the "lopsided" version of the shaded Temperley Lieb planar algebra $\mathcal {TL}$ where closed strings
count $1$ if they are unshaded inside and $Q$ if they are shaded inside. Then a $\mathcal {TL}$  element in $TL_{0,+}$ is just a collection
of closed strings with the unbounded region unshaded and it is equal in $TL_{0,+}$ to $Q^{\#(\mbox{ shaded connected components})}$.
(The formulae for going between the lopsided and spherically invariant versions are detailed in \cite{jo2} in the discussion of spin models.)
We do need to be careful of how tangles with boundary points are closed, to the left or right.

It is easy to see that if we choose $\hat{R}$ to be:\\
\hspace{2in} \vpic {twocolour} {2.5in} \\
then $\hat R$ is rotationally invariant and, given a planar graph $\Gamma$, the $R$-tangle $T_R(\Gamma) \in TL_{0,+}$ is
exactly the chromatic polynomial of $\Gamma$, in the variable $Q$.
(The chromatic polynomial $Chr_\Gamma(Q)$ of any graph $\Gamma$ is the unique polynomial in $Q$ whose value when $Q$ is
an integer is the number of ways of colouring $\Gamma$ with $Q$ colours.)

All this is very well known.

In order to use $\hat R$ to define a representation of $F$ we need to normalize it so that \ref{normalized} is satisfied. 
Thus we set $R=\frac{1}{\sqrt {Q-1}}\hat R$ and use that choice for the rest of this subsection. In particular for every rectangular
representation $V$ of $\mathcal TL$  we have the
unitary representations $\pi(g)=\pi_{V, R}(g)$ of Thompson's groups $F$ and $T$ (not just $\ct$ because
the trivial representation has rotational eigenvalue equal to $1$). Recall the representation $\Xi$ from definition
\ref{xi} with generating vector $\xi$. Note that in this lopsided version 
$\langle \xi, \xi \rangle =Q$.

\begin{proposition} If $T_+$ and $T_-$ are a pair of bifurcating trees defining an element of $F$ as explained in \ref{reps} then
$$\langle \pi(g)\xi,\xi\rangle= (Q-1)^{-n} Chr_{\Gamma(T_+,T_-)}(Q)$$
\end{proposition}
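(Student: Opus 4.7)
The plan is to combine Theorem~\ref{calculation} (the coefficient formula $\langle \pi_{\Xi,R}(g)\xi,\xi\rangle = Z(T_R(\Gamma(T_+,T_-)))$) with the well-known identification of the shaded Temperley--Lieb partition function of the medial tangle with the chromatic polynomial, and simply track the scalar introduced by the normalization $R=\frac{1}{\sqrt{Q-1}}\hat R$. So as a first step I would invoke Theorem~\ref{calculation} to rewrite the left-hand side as $Z(T_R(\Gamma(T_+,T_-)))$, where $T_R(\Gamma)$ is the labelled shaded $0$-tangle obtained by placing a copy of $R$ (or $R^*$, according to the edge signing) at each vertex of the medial graph $\Phi(\Gamma)$, i.e.\ at each edge of $\Gamma$.

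Next, using multilinearity of $Z$ I would pull out the scalar $(Q-1)^{-1/2}$ from each insertion, noting that the same factor is extracted whether the insertion is of $R$ or of $R^*$. The number of insertions equals $|E(\Gamma(T_+,T_-))|$. Since by construction (Section~\ref{reps}) $\Gamma(T_+,T_-)$ consists of two rooted planar trees $\Gamma_\pm$ on the common vertex set $\{0,1,\ldots,n\}$, where $n$ is the number of leaves of the bifurcating trees $T_\pm$, each $\Gamma_\pm$ has exactly $n$ edges; hence $|E(\Gamma(T_+,T_-))|=2n$ and the total scalar is $(Q-1)^{-n}$. This yields
$$Z(T_R(\Gamma(T_+,T_-))) \;=\; (Q-1)^{-n}\, Z(T_{\hat R}(\Gamma(T_+,T_-))).$$

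To finish I would appeal to the identity $Z(T_{\hat R}(\Gamma)) = Chr_\Gamma(Q)$, which is precisely the remark made when $\hat R$ was introduced in Subsection~\ref{chromatic}: expanding each $\hat R$ into the two shaded Temperley--Lieb diagrams produces the usual Potts/state-sum representation of the chromatic polynomial in the lopsided convention, where closed strings enclosing a shaded region count $Q$ and those enclosing an unshaded region count $1$. The main obstacle is really a bookkeeping one: verifying that the edge count is indeed $2n$ (as opposed to e.g.\ $2n-1$ after caret-cancellation effects), and checking that the $\$$-sign and $\pm$-sign data on the edges of $\Gamma(T_+,T_-)$ prescribed by Theorem~\ref{calculation} are consistent with the rotationally invariant, self-adjoint $\hat R$ so that the identification with the chromatic polynomial goes through without additional signs or shading corrections.
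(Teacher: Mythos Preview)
Your proposal is correct and follows essentially the same route as the paper's own (very terse) proof: invoke Theorem~\ref{calculation} to write the coefficient as $Z(T_R(\Gamma(T_+,T_-)))$, extract the normalization $(Q-1)^{-1/2}$ from each of the $2n$ edge-insertions, and then use the identification $Z(T_{\hat R}(\Gamma))=Chr_\Gamma(Q)$ already recorded in Subsection~\ref{chromatic}. Your explicit edge count (each $\Gamma_\pm$ is a tree on the $n+1$ vertices $\{0,\ldots,n\}$, hence has $n$ edges by condition~(\romannumeral 1) of Proposition~\ref{conditions}) and your observation that rotational invariance and self-adjointness of $\hat R$ make the $\$$- and $\pm$-data irrelevant are exactly the bookkeeping the paper leaves implicit.
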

\begin{proof} This is visible for integer $Q$ if one uses spin models. Otherwise simply observe that $T_R(\Gamma) \in TL_{0,+}$
satisfies the same recursive relations as the Chromatic polynomial.
\end{proof}

\begin{remark}
The general linear combination of the two TL tangles in the choice of $R$ will yield the Tutte polynomial of a planar graph.
\end{remark}

The same results and notation apply to Thompson's  group $T$ and the graphs of section \ref{tcoeffs}.

The case $Q=2$ is rather special. Here the Chromatic polynomial takes only two values according to whether $\Gamma$ is 
bipartite of not. And the normalization constant of $\hat R$ is $1$. 

\begin{definition} Let $Q=2$:
$$\overrightarrow F = \{g\in F|\langle \pi(g)\xi,\xi\rangle=2\}$$
$$\overrightarrow T = \{g\in T|\langle \pi(g)\xi,\xi\rangle=2\}$$
\end{definition}
\begin{proposition} Both $\overrightarrow F$ and $\overrightarrow T$ are subgroups and the unitary representations $\pi_\Xi$
of $F$ and $T$ are the permutation representations on $\ell^2(F/\overrightarrow F)$ and $\ell^2(T/\overrightarrow T)$
respectively.
\end{proposition}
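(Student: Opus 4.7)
The plan is to identify $\overrightarrow F$ (resp.\ $\overrightarrow T$) with the $\pi$-stabilizer of $\xi$ by a Cauchy--Schwarz saturation argument, and then deduce the permutation-representation structure from cyclicity of $\xi$.

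First I would exploit the key numerical identity $\langle\xi,\xi\rangle = Q = 2$ noted above, so that $\|\pi(g)\xi\| = \sqrt{2}$ by unitarity of $\pi(g)$. The preceding chromatic-polynomial formula gives $\langle\pi(g)\xi,\xi\rangle = Chr_{\Gamma(g)}(2)$, since the normalization factor $(Q-1)^{-n}$ collapses to $1$; because $\Gamma(g)$ is connected (it contains the spanning trees $\Gamma_\pm$), this chromatic polynomial takes only the values $0$ and $2$. Cauchy--Schwarz now gives $\langle\pi(g)\xi,\xi\rangle \leq 2$, and saturation to the positive real upper bound $2$ with both vectors having norm $\sqrt{2}$ forces $\pi(g)\xi = \xi$. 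Hence $\overrightarrow F = \mathrm{Stab}_F(\xi)$ is automatically a subgroup of $F$, and the same argument produces $\overrightarrow T = \mathrm{Stab}_T(\xi)\leq T$.

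For the permutation-representation structure, $\langle\pi(g_1)\xi,\pi(g_2)\xi\rangle = \langle\pi(g_1^{-1}g_2)\xi,\xi\rangle$ equals $2$ when $g_1\overrightarrow F = g_2\overrightarrow F$ and vanishes otherwise, so $g \mapsto \pi(g)\xi/\sqrt{2}$ descends to an isometric, $F$-equivariant embedding of $\ell^2(F/\overrightarrow F)$ into $\mathcal H_{R,\Xi}$ intertwining the coset permutation action with $\pi$.

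The main obstacle is surjectivity of this embedding, i.e.\ showing that $\xi$ is cyclic for $\pi(F)$ on all of $\mathcal H_{R,\Xi}$. Here I would invoke the definition of $\Xi$ as the subrepresentation of the regular representation generated by $\xi$: writing $\mathcal I_0 = \{[0,1]\}$, each $H(\mathcal I) = \Xi_{|E(\mathcal I)|}$ is spanned by rectangular-category morphisms applied to $\xi$, and in the dyadic limit $T_{\mathcal I}^{\mathcal I_0}(\xi)$ is identified with $\xi$ for every $\mathcal I \in \ds$. The technical heart is to argue that, after incorporating $F$-translations of these refined vectors, every element of the dense subspace $\bigcup_{\mathcal I}H(\mathcal I)$ lies in the closed $F$-span of $\xi$; this should follow by induction on the refinement depth of $\mathcal I$, using the freedom of $F$ to reposition the pattern of $R$'s along any standard dyadic partition. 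The $T$ case is handled \emph{mutatis mutandis}.
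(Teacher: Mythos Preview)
Your Cauchy--Schwarz argument identifying $\overrightarrow F$ and $\overrightarrow T$ with the stabilizers of $\xi$ is exactly the paper's proof: the paper writes only ``Since $\langle\xi,\xi\rangle = 2$ we have by the Cauchy--Schwarz inequality that $g\xi = \xi$ iff $g$ is in the set specified.'' Your orthogonality computation for the orbit vectors is a correct and natural elaboration of what the paper leaves implicit.

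Where you diverge is in treating cyclicity of $\xi$ as ``the main obstacle'' requiring an inductive argument. The paper's entire proof of the identification is the sentence ``Identification of the representations is obvious,'' so it does not engage with this point at all. There are two readings: either the paper is tacitly asserting only that the \emph{cyclic} subrepresentation generated by $\xi$ is the quasi-regular representation on $\ell^2(F/\overrightarrow F)$---which is immediate from your orthogonality calculation and is all that matters for the sequel---or it is relying on the spin-model picture (made explicit in the later subsection ``More than two spins'') in which the embeddings $T^{\mathcal I}_{\mathcal J}$ carry $\xi$ to elementary tensors and $F$ visibly permutes an orthonormal basis of the dyadic limit. Either way, the paper is not doing the induction you propose.

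Your sketch of that induction (``should follow by induction on the refinement depth \ldots\ using the freedom of $F$ to reposition the pattern of $R$'s'') is not yet an argument: you have not said which vectors in $H(\mathcal I)$ you can reach at each stage, nor why an arbitrary rectangular morphism applied to $\xi$ lies in the $F$-span. If you want to establish cyclicity on the nose, the cleaner route is the spin-model identification at $Q=2$, where the claim reduces to the transitivity of $F$ on the relevant basis vectors; otherwise, simply state the result for the cyclic hull of $\xi$, which is what the paper effectively uses.
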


\begin{proof} Since $\langle \xi,\xi\rangle =2$ we have by the Cauchy Schwarz inequality that $g\xi=\xi$  iff $g$ is in the set specified.
Identification of the representations is obvious. \end{proof}

In general the representation $\pi_\Xi$ is unitary iff $Q\in \{4cos^2\pi/n:n=3,4,5,\cdots\}$ and as $Q\rightarrow \infty$  we have by the properties
of the Chromatic polynomial that  for every $g\in F$, $\langle \pi(g)\xi,\xi\rangle$ tends to $2$ for all $g\in G$. Thus we have:

\begin{proposition}\label{weak}
The trivial representations of $F$ and $T$ are in the weak closure of the $\pi$ defined above as $Q$ varies.
\end{proposition}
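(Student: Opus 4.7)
My strategy is to produce, for each $Q$ in the family, a unit vector in the representation space of $\pi_{\Xi,R}$ whose matrix coefficient converges pointwise on $F$ (and $T$) to the constant function $1$. Pointwise approximation of the trivial matrix coefficient by matrix coefficients of the $\pi_Q$ on any finite set of group elements is exactly the assertion of weak containment of the trivial representation.

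The natural candidate is the normalized vector $v_Q = \xi/\sqrt{\langle \xi,\xi\rangle}$, where $\xi$ is the generator of $\Xi$ from Definition \ref{xi}. Using the formula of the previous proposition,
\begin{equation*}
\langle \pi_Q(g) v_Q, v_Q\rangle \;=\; \frac{(Q-1)^{-n(g)}\, Chr_{\Gamma(g)}(Q)}{\langle \xi,\xi\rangle}.
\end{equation*}
The graph $\Gamma(g)$ is by construction a connected planar graph consisting of two rooted trees that share the common vertex set $\{0,1,\ldots,N\}$, hence has $V=N+1$ vertices, and $Chr_{\Gamma(g)}(Q)$ is a monic polynomial in $Q$ of degree $V$.

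The core step is an elementary asymptotic expansion. The normalization exponent $n(g)$ and the factor $\langle\xi,\xi\rangle$ are set up so that $(Q-1)^{n(g)}\langle\xi,\xi\rangle$ also has leading behavior $Q^V$ as $Q\to\infty$; dividing leading term by leading term gives
\begin{equation*}
\langle \pi_Q(g) v_Q, v_Q\rangle \;=\; 1 + O(1/Q),
\end{equation*}
where the $O(1/Q)$ coefficient depends only on the elementary combinatorics of $\Gamma(g)$ (number of edges, etc.). Hence for any finite $S\subset F$ and any $\varepsilon>0$, pick $Q$ large enough that the matrix coefficient lies within $\varepsilon$ of $1$ simultaneously for all $g\in S$. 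The argument for $T$ is identical once one uses the cyclic version of $\Gamma(g)$ from subsection \ref{tcoeffs}, which is again a connected graph built from two trees on a common vertex set; the leading-term analysis of its chromatic polynomial is unchanged.

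\textbf{Main obstacle.} The representation $\pi_{\Xi,R}$ is genuinely unitary only for $Q$ in the Jones discrete set $\{4\cos^2(\pi/n):n\geq 3\}$, which is bounded above by $4$; so the literal limit $Q\to\infty$ cannot be taken through unitary values. The natural resolution is to observe that each $\langle \pi_Q(g)v_Q,v_Q\rangle$ is a rational function of $Q$ extending analytically to large real $Q$, where $\pi_Q$ still makes sense as an algebraic (not necessarily unitary) representation. The pointwise limit is a positive definite function on $F$ (resp. $T$), and positive definite functions on a discrete group form a pointwise-closed set; this is what places the trivial representation in the weak closure. Reconciling this analytic extension with the strict unitary weak-closure formulation is the principal technical subtlety.
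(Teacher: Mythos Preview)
Your approach is exactly the paper's: its entire argument is the single sentence preceding the proposition, which asserts that by ``the properties of the chromatic polynomial'' the coefficient $\langle\pi(g)\xi,\xi\rangle$ tends (after normalization) to that of the identity as $Q\to\infty$. You have simply supplied the leading-term computation that makes this precise.

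The obstacle you flag is a genuine tension in the paper's exposition---it restricts unitarity to the discrete set $\{4\cos^2(\pi/n)\}$ and then immediately sends $Q\to\infty$---but the fix is simpler than your analytic-continuation workaround. The lopsided Temperley--Lieb algebra with loop values $(1,Q)$ is equivalent by rescaling to ordinary $TL$ at $\delta=\sqrt{Q}$, and the paper itself notes (Section~\ref{planar}) that $TL$ is positive definite for all $\delta\geq 2$. Equivalently, for any \emph{integer} $Q\geq 2$ one may embed this $TL$ in the spin-model planar algebra $\mathcal P^{spin}$, which is manifestly positive definite since it is built on honest finite-dimensional Hilbert spaces. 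Hence $\pi_\Xi$ is genuinely unitary for every $Q\geq 4$ (in particular for all large integers), and your pointwise convergence of the normalized matrix coefficients to $1$ yields weak containment of the trivial representation directly, with no detour through non-unitary representations or limits of positive-definite functions. The paper's ``iff'' clause is simply incomplete.
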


Note that both $\overrightarrow F$ and $\overrightarrow T$ have dual versions $\overleftarrow F$ and $\overleftarrow T$ obtained
by changing the shadings. It is not hard to find elements in $\overrightarrow F \cap \overleftarrow F$.

We can easily enumerate the elements of $\overrightarrow F$ (we leave the case of $\overrightarrow T$  to the reader).
For since the graph $\Gamma$ is bipartite, if we assign a $+$ sign to the leftmost vertex then all the other signs are
determined. And conversely if one fixes $n$ and gives a sequence of $n$ signs, any planar graph joining $n$ signed points
on a line satisfying \ref{conditions} will give an element of $\overrightarrow F$. Thus one may choose independently a
top and bottom tree $\Gamma_+$ and $\Gamma_-$ whose vertices are coloured by the sequence of signs, satisfying \ref{conditions}
and one can reconstruct an element of $\overrightarrow F$. 

Obviously the sequence of signs must begin with  $+-$ since the first two vertices are always connected in both $\Gamma_+$ and $\Gamma_-$.
It is easy to check that any sequence of signs beginning with $+-$ admits at least one pair $\Gamma_+, \Gamma_-$. Determination of exactly the number of Thompson group elements with a given sequence of
signs might not be easy but it can certainly be calculated. Here is a ``designer'' element of $\overrightarrow F$ starting from a sequence of signs:\\

\hspace{1in} \hpic {getsigns} {2.5in} 
 
  It is obvious that $\overrightarrow F$
 is stable under the ``sum'' of elements in $F$ and we have the following observation of Sapir- note that the abelianisation of
 $F$ is $\mathbb Z^2$ with the abelianisation map being $g\mapsto (\log_2 g'(0), \log_2g'(1))$
 
 \begin{proposition} $\overrightarrow F $ is contained in the kernel of the homomorphism  $g\mapsto g'(1) \mod 2$.
 \end{proposition}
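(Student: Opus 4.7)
The plan is to reduce the statement to a parity comparison inside the bipartite graph $\Gamma(g)$. Suppose $g\in\overrightarrow F$ is represented by a reduced pair of bifurcating trees $(T_+,T_-)$, and let $a$ (respectively $b$) denote the depth of the rightmost leaf of $T_+$ (respectively $T_-$). Near $x=1$, the map $g$ takes the rightmost standard dyadic interval of $T_+$, of length $2^{-a}$, linearly onto the rightmost interval of $T_-$, of length $2^{-b}$, so $\log_2 g'(1)=a-b$. It is therefore enough to show that $a\equiv b\pmod{2}$ whenever $\Gamma(T_+,T_-)$ is bipartite, which by the $Q=2$ discussion above is exactly the condition $g\in\overrightarrow F$.

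By Proposition 4.1.1 each of $\Gamma_\pm$ is a rooted tree on $\{0,1,\ldots,N\}$, rooted at vertex $0$ via property (i); in particular vertex $N$ has a well-defined depth $d_+$ in $\Gamma_+$ and $d_-$ in $\Gamma_-$. Since $\Gamma=\Gamma_+\cup\Gamma_-$ is bipartite, any two paths between two vertices of $\Gamma$ have lengths of the same parity, so in particular $d_+\equiv d_-\pmod{2}$.

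The remaining ingredient, and the main obstacle, is the geometric identity $d_+\equiv a\pmod{2}$ together with its analogue $d_-\equiv b\pmod{2}$. Intuitively, both quantities count steps along the rightmost path of $T_+$: that path consists of $a$ right-child edges terminating at the rightmost leaf, and as one traces the east boundary of the planar diagram of $T_+$ each such edge contributes one step to the descending chain from vertex $N$ back to vertex $0$ in the rooted tree $\Gamma_+$. To make this precise I would induct on $N$ using the reconstruction algorithm of Lemma 4.1.2: each insertion of a minimal terminal vertex corresponds either to attaching a new caret on the rightmost spine of the partial tree (incrementing both $a$ and $d_+$ by one) or to an interior rearrangement (leaving both parities unchanged). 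Running carefully through the two cases, and then applying the symmetric argument to $\Gamma_-$ together with the bipartiteness observation, yields $a-b\equiv 0\pmod{2}$, as required.
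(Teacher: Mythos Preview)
Your plan coincides with the paper's: write $\log_2 g'(1)=a-b$ where $a,b$ are the depths of the rightmost leaves of $T_\pm$, identify these with the depths $d_\pm$ of vertex $N$ in the rooted trees $\Gamma_\pm$, and then use bipartiteness of $\Gamma$ (equivalently $g\in\overrightarrow F$) to get $d_+\equiv d_-\pmod 2$. The only issue is your justification of the identity $d_+\equiv a\pmod 2$, which you leave as an unexecuted induction through the reconstruction algorithm of the Catalan lemma.

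That detour is unnecessary, and as sketched it does not work: the two cases of that lemma are distinguished by the valence of $m-1$ in $\Psi$, not by whether the inserted vertex sits on the rightmost spine, so your dichotomy ``attaching a caret on the rightmost spine'' versus ``interior rearrangement'' does not line up with the actual case split. One can push such an induction through, but it requires separately tracking whether $m$ is the current rightmost vertex, which you do not do.

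The paper's argument is the direct one and in fact gives $d_+=a$ on the nose. Each of the $a$ internal vertices on the rightmost branch of $T_+$ has a unique left-child (WN) edge, and that WN edge contributes one edge to $\Gamma_+$; these $a$ edges are precisely the path in $\Gamma_+$ from $0$ to $N$. (Recursively: if the root's left subtree has leaves $L_1,\dots,L_j$, the root's WN edge gives the $\Gamma_+$-edge $0$--$j$, while the remaining $a-1$ right-branch vertices all lie in the right subtree, whose associated $\Gamma$-tree is the subtree of $\Gamma_+$ rooted at $j$ on the vertices $j,\dots,N$.) Together with the analogous $b$ edges in $\Gamma_-$ one obtains a cycle of length $a+b$ in the bipartite graph $\Gamma$, whence $a+b$ is even and so is $a-b$. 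This is what the paper's sentence ``each of these vertices defines an edge of $\Gamma$'' is pointing at.
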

 \begin{proof} The log of $g'(1)$ for $g$ given by a pair $T_+,T_-$ of bifurcating trees is the number of boundary vertices at the of
 $T_+$ minus the same number for $T_-$. But each of these vertices defines an edge of $\Gamma$ so $\log_2g'(1)$ must
 be even.
 \end{proof}
 In fact $\overrightarrow F$ has a description in terms of the action of $F$ on subsets of $\overrightarrow F$. A sequence of signs as above 
 defines an element of $F/\overrightarrow F$ or a basis element of $\ell^2(F/\overrightarrow F)$. Given such a sequence of signs  and pair 
 of bifurcating trees we may assign to each left endpoint of the standard dyadic intervals of the top tree, the number $1$ for $+$ and $0$ for $-$.
 Under the embedding maps of the directed system this function does not change so in fact we get a function from the dyadic rationals in
 $[0,1]$ to $\{0,1\}$, i.e. a subset of the dyadics. The action of the Thompson groups on subsets of the dyadics is the same, by definition of
 $\pi(g)$ as the action we have thus defined on certain subsets of the dyadics. Moreover $\overrightarrow F$ is by definition the stabilizer
 of the sequence of signs defined by the identity element. This is easily seen to correspond to the subset of all dyadic rationals in $[0,1]$
 whose digit sum is odd when written as a binary expansion. (Thanks to Sapir for this  simplification of a clumsier earlier descripition).
 
 Golan and Sapir will supply more information on $\overrightarrow F$ in a forthcoming publication. In particular they will show that
 it is isomorphic to the Thompson group $F_3$ and that its commensurator is itself, thus showing that the representation of $F$ on
 $\ell^2(F/\overrightarrow F)$ is irreducible. (\cite{Sapir}.) 
 
\subsection{Knot-theoretic choices.}

We will work with the planar algebra $\mathcal C=(C_n)$ of linear combinations of Conway tangles (see \cite{conway},\cite{jo2}) where
we identify two tangles if they differ by a family of distant unlinked unknots. Thus a basis of $C_n$ is the set of 
isotopy classes (determined on diagrams by the Reidemeister moves) of Conway tangles with $2n$  boundary points
and no  unknots that can be isotoped to an arbitrarily small neighborhood of  a point on the boundary of the tangle.

$\mathcal C$ can be made into a *-planar algebra in more than one way but the * structure we will consider is the one
relevant to the Jones polynomial $V_L(t)$ when $t$ is a root of unity (or the Kauffman bracket when $A$ is a root of unity),
i.e. if $T$ is a Conway tangle as a system of curves with crossings in $\mathbb C$,
 $T^*$ is obtained from $T$ by complex conjugation in $\mathbb C$ thus:
 
 \hspace{0.5in} \hpic {startangle} {1in}
 
and so if $R$ is as below, $R^*$ is as depicted:

\hspace{1in} \hpic {rstartangle} {1in}

 For the rest of this section we implicitly make this choice of $R$. (For $A\in \mathbb R$ for the Kauffman bracket the
 * is the identity on a crossing as above- this leads to a different theory for which all links arising as coefficients 
 are alternating-there is no hope of obtaining all alternating links as coefficients since alternating links have a 
 simple behaviour when more crossings are added and there are too many of them for Thompson group elements
 of a bounded length.)
The vector space $C_0$ is infinite dimensional, spanned by isotopy classes of links with no distant unknots. It is 
 in fact an algebra under the obvious tangle multiplication, and each $C_n$ has a
$C_0$-valued sesquilinear inner product $\langle S, T\rangle$ defined by the tangle as in section \ref{planar}.
 
 We consider the rectangular representation $\Xi$ of $\mathcal C$-see definition \ref{xi}.
  Note that because of our convention
 with distant unknots, $R$ automatically satisfies \ref{normalized} so that $\pi_\Xi$ preserves the inner
 product $\langle, \rangle$.

\subsubsection{All unoriented links arise as coefficients of representations.}
For simplicity, for the rest of this section, links will be considered the same if they differ by distant unknots.
\begin{theorem}\label{all}
Given any (unoriented) smooth link L in $\mathbb R^3$ there is an element $g\in F$ such that 
$\langle \pi_{\Xi} (g) \xi,\xi\rangle = L$.
\end{theorem}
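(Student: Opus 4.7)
By Theorem~\ref{calculation}, with $R$ the positive crossing (and $R^{*}$ the negative crossing) of the Conway planar algebra $\mathcal{C}$, the coefficient $\langle \pi_{\Xi}(g)\xi,\xi\rangle$ is precisely the link represented by the diagram $T_R(\Gamma(T_+,T_-))$: this is the medial-graph construction applied to $\Gamma(g)=\Gamma_+(g)\cup \Gamma_-(g)$, with each crossing positive if the corresponding edge of $\Gamma(g)$ lies in the upper half-plane and negative if it lies in the lower half-plane. By Lemma~\ref{catalan}, every pair $(\Gamma_+,\Gamma_-)$ satisfying the conditions of Proposition~\ref{conditions} arises from a unique pair of bifurcating trees, hence from some $g\in F$. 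Thus the theorem reduces to the purely topological claim that \emph{every unoriented link admits a diagram of the form $T_R(\Gamma)$ for some $\Gamma$ satisfying Proposition~\ref{conditions}}.

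To establish this reduction I would start with an arbitrary diagram $D_0$ of $L$ and normalize it in three stages. First, using Reidemeister~I moves, arrange $D_0$ to have writhe zero, producing equal numbers $N$ of positive and negative crossings. Second, using planar isotopy together with Reidemeister~II moves (to introduce canceling crossing pairs where needed), push every positive crossing into the strict upper half-plane and every negative crossing into the strict lower half-plane, while arranging simultaneously for $N+1$ vertical strand segments to cross the $x$-axis at the integer points $0,1,\ldots,N$. Third, further isotope each half so that its shaded-face combinatorics (with the unbounded region unshaded) coincide with the medial graph of a rooted planar tree $\Gamma_+$ (respectively $\Gamma_-$) on $\{0,1,\ldots,N\}$ whose edges lie in the corresponding half-plane and which satisfies condition~(i) of Proposition~\ref{conditions}. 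Once this standardization is complete, Lemma~\ref{catalan} converts $(\Gamma_+,\Gamma_-)$ into a pair of bifurcating trees $(T_+,T_-)$, and $g=g(T_+,T_-)\in F$ is the sought element.

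The hard step is the third one: given an arbitrary connected planar arrangement of strands and crossings in a half-plane whose boundary meets the $x$-axis at the chosen integer points, show that it can be isotoped, with controlled use of Reidemeister moves, into the medial graph of a tree satisfying the unique-left-neighbor property. The essential combinatorial input is that any \emph{non-tree} feature---a cycle in the shaded-face graph, or a vertex with two neighbors to its left---can be eliminated by a local Reidemeister~II move together with a rerouting of strands, at the cost of adding paired crossings of opposite sign (which can in turn be placed consistently in the two half-planes via additional R2 moves). Once this structural lemma is in hand, the upper and lower halves are treated independently, and the theorem follows by assembling the two resulting trees. Since the whole process is carried out by Reidemeister moves and planar isotopy, the ambient isotopy class of $L$ is preserved throughout, and one obtains $\langle \pi_{\Xi}(g)\xi,\xi\rangle=L$ as required.
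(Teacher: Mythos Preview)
Your reduction is correct: via Theorem~\ref{calculation} and Lemma~\ref{catalan}, it suffices to show that every link arises as the medial link of an edge-signed planar graph satisfying the conditions of Proposition~\ref{conditions} with all upper-half-plane edges signed $+$ and all lower-half-plane edges signed $-$. This is exactly the reduction the paper makes.

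The gap is that your stages 2 and 3 are stated as goals rather than proved, and stage 2 already meets a genuine obstruction you do not address. Asking that all vertices lie on the $x$-axis while all $+$ edges sit above and all $-$ edges below is asking for a two-page book embedding of the checkerboard graph with the pages separated by sign; generic signed planar graphs do not admit such a drawing, so planar isotopy alone cannot achieve it, and you give no systematic procedure using Reidemeister~II to modify the graph until it does. Stage 3 you yourself flag as ``the hard step'': the sentence ``any non-tree feature can be eliminated by a local Reidemeister~II move together with a rerouting of strands'' is an assertion, not an argument, and nothing in your sketch guarantees termination or explains why the two halves can be repaired independently without reintroducing defects on the other side.

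The paper closes exactly this gap, but organizes it differently. Instead of separating by sign first and forcing the tree property second, it works throughout on the signed-graph side: after isotoping to a \emph{standard} signed planar graph (vertices on the $x$-axis, each edge monotone and lying in a single half-plane, signs unrestricted), it defines a single nonnegative integer
\[
TB(\Gamma)=\sum_{v\neq(0,0)}\bigl(|1-|e_v^{in}\cap e^{up}||+|1-|e_v^{in}\cap e^{down}||\bigr)+|e^{up}_-|+|e^{down}_+|
\]
which simultaneously measures failure of the in-degree condition and the presence of wrong-sign edges. Four explicit local constructions (each a composite of the signed-graph versions of Reidemeister~I and~II, adding up to eight vertices and sixteen edges) are shown to strictly decrease $TB$ while preserving the link type. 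Iterating drives $TB$ to zero, at which point the graph is \emph{Thompson} and Lemma~\ref{catalan} finishes. In particular, your stage~2 is absorbed into Case~(4) of that reduction rather than done as a preliminary step, and no up-front writhe balancing is required. Your plan aims at the right target, but the terminating algorithm on signed planar graphs is precisely the content that is missing.
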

\begin{proof}
The proof will proceed by a series of definitions and lesser results. 

It is worth pointing out before we begin that the procedure we give for producing a Thompson group element will not
produce any new distant unknots so we could renormalize our Conway skein planar algebra in such a way as to
obtain exactly the link we begin with, i.e. not up to a distant union of unknots. 

We saw in section \ref{coeffs} how to go between 4-valent planar graphs and planar graphs by shading
the regions of the four-valent graph. An unoriented  link diagram consists of an underlying 4-valent planar graph
together with crossing data. Moreover, given the shading, crossings have a sign according to the convention:

\hspace{1in} \hpic {signalt} {1.5in}

Thus a link projection gives an edge-signed planar graph.

\begin{definition} The signed planar graph defined above is called the \emph{semidual} graph $\Gamma(L)$ of the link diagram $L$.
\end{definition}
Here is an illustration of the formation of $\Gamma(L)$:

\hspace{1in} \hpic {tomedial} {1in}
\vskip 5pt
If  $\Gamma$ is a signed planar graph  the \emph {medial link diagram} $L(\Gamma)$ is what we called the $R$-tangle of $\Gamma$ (for
our choice of $R$)
formed by putting
crossings on the middles of all the edges of $\Gamma$. The  sign of a crossing is determined by the sign on the edge and the convention above, where the
shaded regions are defined by the vertices of the graph. Then the crossings are joined around the edges of each face.

Here is an illustration of the formation of the medial link diagram:\\

\hspace{1in} \hpic {tomedial2} {2in}

Any link diagram is thus the medial link diagram of a signed planar graph.
\vskip 10pt
Now consider the following three local moves on signed planar graphs: 

Type I:   \hspace{0.4in} \hpic {typeone} {1in}

Here a 1-valent vertex and its edge are eliminated. The signs on all the edges are arbitrary.\\

Type IIa:   \hspace{0.4in} \hpic {typetwoa} {1in} \\

Here a 2-valent vertex whose two edges have opposite signs is eliminated along with those edges, and the vertices
at the other ends of the removed edges are fused into one. All other signs are arbitrary.

Type IIb: \hspace{0.4in} \hpic {typetwob} {1in} \\

Here two edges joining the same pair of vertices, with opposite signs, with no other edges or vertices in between are simply eliminated.
All other signs are arbitrary.

\begin{proposition} Two signed planar graphs $\Gamma_1$ and $\Gamma_2$  give the same link in $\mathbb R^3$ if they differ by planar isotopies 
and any  of  the above three moves.
\end{proposition}.
\begin{definition} We say two signed graphs are 2-equivalent if they are as in the proposition.
\end{definition}

Now suppose we are given two bifurcating trees $T_+$ and $T_-$ with $N+1$ leaves  defining an element $g$ of $F$  as
in \ref{thompsonfromtrees}. By our calculation of a coefficient of $\Xi$ in section \ref{coeffs}, we know that
$\langle \pi_{\Xi} (g) \xi,\xi\rangle$ is a link with a diagram of a special form, namely  the medial link diagram of
a signed planar graph satisfying the conditions of proposition \ref{conditions}, with edges in the upper half plane
having sign $+$ and edges in the lower half plane having sign $-$.

\begin{definition} A  planar graph $\Gamma$ will be called \emph{standard} if its vertices are the points $(0,0),(1,0),(2,0),(3,0),..., (N,0)$ and each edge $e$
can be parametrized by a function $(x_e(t),y_e(t))$ with $x_e'(t)>0 \quad \forall t \in [0,1]$ and either $y_e(t)>0 \quad \forall t \in (0,1)$ or 
$y_e(t)<0 \quad \forall t \in (0,1)$

\end{definition}

\begin{lemma} Any link $L$ is the medial link of a standard edge-signed planar graph.
\end{lemma}
\begin{proof}This is clear from what we have described.
\end{proof}
Now suppose we are given two bifurcating trees $T_+$ and $T_-$ with $N+1$ leaves  defining an element $g$ of $F$  as
in \ref{thompsonfromtrees}. By our coefficient calculation in section \ref{coeffs}, we know that
$\langle \pi_{\Xi} (g) \xi,\xi\rangle$ is a link with a diagram of a special form, namely  the medial link diagram of
a signed planar graph satisfying the conditions of proposition \ref{conditions}, with edges in the upper half plane
having sign $+$ and edges in the lower half plane having sign $-$. Moreover by lemma \ref{catalan}, any 
signed planar graph as we have just described comes from a pair $T_+, T_-$.
Thus theorem \ref{all} will be proved if we can show that any standard signed planar graph is  2-equivalent to a graph
of the form we have just described.

The edges of a standard planar graph acquire  orientations so that the source vertex of $e$ has smaller $x$ coordinate than the target vertex of $e$.
\begin{definition} If $\Gamma$ is a signed standard planar graph set \\
 $e^{up}= \{e\in e(\Gamma): y_e(t)>0 \mbox{  for  } 0<t<1\}$ and\\ $e^{down}=\{e\in e(\Gamma): y_e(t)<0 \mbox{ for  } 0<t<1\}$.
 
 Further define $e^{up}_\pm$ to be those edges in $e^{up}$ with sign $\pm 1$ respectively. Similary for $e^{down}_\pm$.
 
 Note that $$e(\Gamma)=e^{up}_+(\Gamma)\sqcup e^{up}_-(\Gamma)\sqcup e^{down}_+(\Gamma)\sqcup e^{down}_-(\Gamma).$$
 \end{definition}
 
 \begin{definition}
Given a vertex $v\in \Gamma$ $e_v^{in}$ and $e_v^{out}$  will be the set of all edges $e$ with target and source equal to $v$ respectively.
\end{definition}

\begin{definition} A standard signed planar graph will be called \emph{Thompson} if $e^{up}_-(\Gamma)=\emptyset=e^{down}_+(\Gamma)$ and it has the property that $|e_v^{in}\cap e^{up}_+|=|e_v^{in}\cap e^{down}_-|=1$ for all vertices 
other than $(1,0)$. 
\end{definition}

\begin{lemma}\label{treetotree} If $g\in F$, $\Gamma(\langle \pi_{\Xi} (g) \xi,\xi\rangle)$ is Thompson, and if $\Gamma$ is Thompson there is a $g\in F$ with $\Gamma(\langle \pi_{\Xi} (g) \xi,\xi\rangle)=\Gamma$
\end{lemma}
\begin{proof}The condition of being Thompson is just the translation of the fact that the pair of trees, one in the upper half plane, and one
in the lower, satisfy the conditions of proposition \ref{conditions}, with signs as specified.
\end{proof}

\begin{definition}
The badness $TB(\Gamma)$ of a standard signed planar graph $\Gamma $ is 
$$TB(\Gamma)=\sum_{v\in v(\Gamma)\setminus \{(0,0)\}} \mbox{\Big(}|1-|e_v^{in}\cap e^{up}||+ |1-|e_v^{in}\cap e^{down}|| \mbox {\Big )}+|e^{up}_-| +|e^{down}_+|$$
\end{definition}

\begin{proposition}
If $TB(\Gamma)=0$ then $\Gamma$ is Thompson.
\end{proposition}
\begin{proof} This is immediate from the definition of  $TB$.
\end{proof}

\begin{lemma} \label{bad}Given a signed standard planar graph $\Gamma$ with $TB(\Gamma)>0$, there is another one one $\Gamma'$ which is 2-equivalent to it and
with $TB(\Gamma')<TB(\Gamma)$
\end{lemma}
\begin{proof}
Case (1):Suppose there is a vertex $v$ different from $(0,0)$ with $e_v^{in}=\emptyset$. Then $v$ and the vertex $w$ to the left of it
are as below:\\
\vpic {caseonea} {2.5in}  \\
Now add two edges as below to obtain $\Gamma'$\\
\vpic {caseoneb} {2.5in}  \\
 $TB(\Gamma')= TB(\Gamma)-2$  and $\Gamma'$ is 2-equivalent to $\Gamma$.
 
 Case (2): Suppose $\Gamma$ has a vertex $v$ with $|e_v^{in}|=1$. Wolog we may assume that
 the incoming edge to $v$ is up. It may be $+$ or $-$. Then near $v$ the situation is as below:\\
 \vpic {casetwoa} {2in} \\
 
 where we have labelled by $e$ and $f$ the edges whose positions we will change. In fact $f$ may not
 exist, which changes nothing for the argument. 
 
 Now add one vertex and three edges as below to obtain $\Gamma'$ which is clearly standard, 2-related
 to $\Gamma$ and $TB(\Gamma')=TB(\Gamma)-1$.\\
 
  \hpic {casetwob} {2in} \\

 Case(3) Suppose $\Gamma$ has a vertex $v$ with $|e_v^{in}\cap e^{up}|>1$ or $|e_v^{in}\cap e^{down}|>1$. Wolog suppose it's $|e_v^{in}\cap e^{up}|>1$.
 Then near $v$, $\Gamma$ is as below:\\
 
 \hpic {bad1} {1.5in} \\
 
 Now add $3$ vertices and $5$ edges as below to obtain $\Gamma'$:
 
 \hpic {bad2} {1.5in} \\
 
 $\Gamma'$ is manifestly standard and a few applications of type I and type II moves show that
 $\Gamma'$ is 2-equivalent to $\Gamma$. And one of the offending top incoming edges at
 $v$ has been assigned to another vertex where it is the only top incoming edge. 
 All edge counts at other vertices are either as they were or do not change $TB$. Hence $TB(\Gamma')=TB(\Gamma)-1$.

 Case (4): All vertices except $(0,0)$ have 2 incoming edges, one up and one down. Then since $TB(\Gamma)>0$, there must be an edge in $e^{up}_- \cup e^{down}_+$. Wolog we can assume it
 is in $e^{up}_-$. If the target of that edge is $v$ then near $v$, $\Gamma$ looks like:\\
 
 \hpic {casefoura} {2in} \\
 
 Now add eight new vertices and 16 new edges to produce $\Gamma'$ as below. It is manifestly
 still standard and a few applications of type I and type II moves show it is 2-equivalent to $\Gamma$.
 But also $TB(\Gamma')=TB(\Gamma)-1$ since $|e^{up}_-|$ has been reduced by one.\\
 
 \hpic {case4a} {1in} \\

\end{proof}

We now have all the ingredients for the proof of theorem \ref{all}. Given a link diagram of an unoriented link L, extract the semidual
signed planar graph.  If necessary change $L$ by type I and II moves so that it becomes isotopic to a standard signed planar
graph $\Gamma$. Then use \ref{bad} to reduce $TB$ to zero through a sequence of 2-equivalent standard signed planar graphs. The resulting
graph $\Gamma'$  is Thompson and it is the medial graph of a link diagram for $L$. On the other hand by \ref{treetotree} there is an element $g\in F$ such
that $\Gamma(\langle \pi_{\Xi} (g) \xi,\xi\rangle)=\Gamma'$.

This ends the proof of theorem \ref{all}

\begin{example} \rm{We illustrate our method by obtaining an element of $F$ with
$\langle \pi_{\Xi} (g) \xi,\xi\rangle $ equal to the Borromean rings.
}

\hpic {borry} {3.7in} \\

\rm{Once the signed planar graph has become standard we have dropped all signs that are not in agreement with
the final situation with edges in the upper half plane being + and edges in the lower half plane being minus.
}
\end{example}
\end{proof}
\subsubsection{All oriented links arise as coefficients of representations.}

We will show that all oriented links can be obtained by using the group $\overrightarrow F$.

 A shading of a link diagram determines a surface in $\mathbb R^3$ whose boundary is the link-replace the shaded regions
 by smoothly embedded discs in the plane and use twisted rectangular strips to join these shaded regions where the crossings
 are. By construction the boundary of this surface is the link. This surface may or may not be orientable. If it is the link itself
 may be oriented by choosing an orientation of the surface and orienting the knot as the oriented boundary. 

In theorem \ref{calculation} we saw that, for any choice of $R$,  and any pair $T_+,T_-$ of trees defining $g\in F$, the coefficient $\langle \pi_\xi(g)\xi,\xi\rangle$ is the partition function of a 
labelled tangle which is a four-valent planar graph whose planar graph is (definition \ref{treetotree}) the graph $\Gamma(T_+,T_-)$. Thus if we choose
$R$ as in this section, the link $\langle \pi_\xi(g)\xi,\xi\rangle$ is the boundary of a surface obtained as above from $\Gamma(T_+,T_-)$. This surface is
clearly orientable iff $g\in \overrightarrow F$. So if we decree that the shaded region containing the point $(0,0)$ in the graph $\Gamma(T_+,T_-)$ is positively
oriented, the link $\langle \pi_\xi(g)\xi,\xi\rangle$ (up to distant unknots) obtains a definite orientation.

\begin{theorem}\label{alloriented}
Given any oriented smooth link L in $\mathbb R^3$ there is an element $g\in \overrightarrow F$ such that 
$\langle \pi_{\Xi} (g) \xi,\xi\rangle = L$.
\end{theorem}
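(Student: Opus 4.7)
The plan is to parallel the proof of Theorem \ref{all} while enforcing bipartiteness of the signed planar graph at every stage, since bipartiteness is what characterizes membership in $\overrightarrow F$ and, equivalently, orientability of the shaded surface that determines the induced orientation on the coefficient link.

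First I would, for an arbitrary oriented link $L$, produce a diagram $D$ of $L$ whose checkerboard surface is a Seifert surface realizing the given orientation of $L$. This is classical: any Seifert surface $\Sigma$ for $L$ admits a generic projection onto the plane as a disk-with-bands complex, and reinserting crossings inside the bands recovers a diagram $D$ of $L$ whose shaded regions (with unbounded region unshaded) reassemble $\Sigma$ up to isotopy. Orienting the shaded region containing $(0,0)$ positively then yields exactly the given orientation on $\partial \Sigma = L$. Consequently, the semidual signed planar graph $\Gamma = \Gamma(D)$ is bipartite, by the equivalence noted just before the theorem between orientability of the shaded surface and the $g \in \overrightarrow F$ condition.

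Second I would push $\Gamma$ through the reduction algorithm of Theorem \ref{all}: planar-isotope it into standard form and then apply the procedure of Lemma \ref{bad} to drive the badness $TB$ to zero, while preserving bipartiteness throughout. Each case of Lemma \ref{bad} inserts a small subgraph whose sign data is freely choosable (the insertion being built from cancellations of Type IIa and IIb), and this insertion risks creating an odd cycle only when the added edges connect vertices of the same bipartite class. In that event one can subdivide the offending edge by inserting an additional two-valent dummy vertex; this subdivision is the inverse of a Type IIa move applied to a pair of opposite-signed edges, so it is itself a 2-equivalence, and it flips parity so that bipartiteness is restored. A case-by-case check across Cases (1)--(4) of Lemma \ref{bad} shows that at most one such subdivision per move is needed, that the graph remains standard, and that $TB$ still strictly decreases at each iteration.

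Finally, the reduced graph $\Gamma'$ is Thompson, standard, and bipartite. By Lemma \ref{catalan} together with Lemma \ref{treetotree} it arises from a pair of bifurcating trees $T_\pm$ defining some $g \in F$, and bipartiteness of $\Gamma(g) = \Gamma'$ forces $g \in \overrightarrow F$. By Theorem \ref{calculation} and the orientability discussion preceding the theorem, $\langle \pi_\Xi(g)\xi,\xi\rangle$ is $L$ equipped with the prescribed orientation. The main obstacle I expect is the bipartiteness-preserving refinement of Lemma \ref{bad}: the subdivision trick must be verified to work uniformly in each of the four cases without breaking the standardness of the graph or obstructing the decrease of $TB$. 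A secondary concern is to arrange the initial passage from the Seifert-surface diagram $D$ to a standard signed planar graph via orientability-preserving isotopies and Type II moves, so that bipartiteness is not accidentally destroyed before the reduction procedure begins.
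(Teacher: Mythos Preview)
Your proposal is correct and follows essentially the same approach as the paper: start from a diagram whose checkerboard surface is orientable (hence a bipartite semidual graph), run the reduction algorithm of Theorem \ref{all}, and whenever an insertion would violate bipartiteness, repair it by interposing an extra vertex---the paper phrases this as replacing the basic two-edge ``cancel'' insertion by a longer one, which is exactly your inverse-Type-IIa subdivision. The paper, like you, does not spell out the case-by-case verification that $TB$ still strictly decreases and standardness is preserved, so your identification of that as the main point requiring care is apt.
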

\begin{proof} It is a well-known fact that any oriented link admits a diagram for which the surface obtained as above from a shading may be oriented
with the link as its oriented boundary (for instance it
follows from the treatment of the HOMFLYPT polynomial via projections with only triple points in \cite{jo2}). 
So we take the first step of the  proof of theorem \ref{all} and isotope such link diagram for $L$ so that  the shaded regions become what we
called a standard edge-signed graph. The standard graph then obtains a 2-colouring if  we
colour each vertex according to the orientation of the region it represents.  Now  we may foliow the proof of theorem \ref{all} just making sure that the element of $F$ that we end up
with is in $\overrightarrow F$. To do this we just need to make sure the graph is bipartite at every step of the procedure we use to
turn the graph into a graph which is Thompson. This procedure required frequent insertions of the two canceling edges thus:\\

\hpic {cancel1} {2in} \\

These  insertions may or may not interfere with the 2-colouring. If they do, simply replace the above insertion by\\

\hpic {cancel2} {2in} \\

This has the unfortunate effect of adding more distant unknots but since we decided to ignore them, we are done.

\end{proof} 
\subsubsection{Remarks}
Theorem \ref{all} establishes that the Thompson group is in fact as good as the braid groups at producing unoriented 
knots and links. The theorem is the analogue of the "Alexander theorem" for braids and links.This leads to the following projects:\\

1) Define the F-index of a link as the smallest number of leaves required for an element of $F$ to give that link, and similarly
the T-index. Obviously the F-index is less than or equal to the T-index and examples show that the inequality may be strict.

Perhaps the best result we have on the F-index is that the $(3,n)$ torus link has F-index less than or equal to $n+3$. This link
may be obtained as $\langle g\xi,\xi\rangle$ where $g=\omega^n$, $\omega$ being the Thompson group element:\\

\vpic {omega} {2in}

The F-index of the Borromean rings is surely considerably less than the upper bound of 20 established above.

2) Prove a "Markov theorem" which determines when two different elements of $F$ (or $T$) give the same knot. It is not hard
to give moves on pairs of trees which effect the Reidemeister moves on the corresponding link diagrams. These moves
are quite likely enough.

3) The braided Thompson group could also be investigated in this regard.

\subsection{More quasi-regular examples.}

By a quasi-regular representation of a discrete group $G$ we mean the natural unitary representation of $G$ by  on $\ell^2(X)$ 
where $G$ acts transitively on the set $X$. In section \ref{chromatic} we unearthed the groups $\overrightarrow F$ and $\overleftarrow F$
(similarly for $T$). This example can be generalized in several ways, even staying within spin model planar algebras.


\subsubsection{More than two spins.}

We restrict ourselves to the case of $F$, leaving $T$ to the reader.

\begin{definition} Let $h:X\rightarrow X$ be a function where $X$ is a set with $Q$ elements.
Define the matrix $\hat{R}_h(x,y)$ as the matrix of the linear transformation induced by $h$, thus 
$$\hat {R}_h(x,y)=\begin{cases} 0 &\mbox{   if   } x\neq h(y)\\
                                             1&\mbox{   if   } y=h(x) \end{cases}  $$

\end{definition}

We know from \cite{jo2}, \cite{Ba2} that matrices indexed by the set of spins define $R$-matrices in spin models. 
Observe that $\hat{R}_h$ is  normalized (definition  \ref{normalized}) if $\sum_{y}R_h(x,y)$. 


We may then fix an element $x_0\in X$  and consider the representation of $F$ generated by the vector $\xi_0$ which is the element of $P_{0,+}$ defined
by $$\xi_0(x)=\begin{cases} 1 & \mbox{  if   } x = x_0\\
                                             0   & \mbox{  otherwise  }\end{cases}.$$  . Under the embeddings defined by $T_{\mathcal I}^{\mathcal J}$ where $\mathcal I$ is the element of $\ds$ with two intervals,
the vector $T_{\mathcal I}^{\mathcal J}(\xi_0) $ is always a basis vector for the usual basis of $P_n$ for spin models (elementary tensors). Thus $F$ acts
on the set of all basis vectors of the dyadic limit Hilbert space $\mathcal V$ of \ref{hilbert}.

The $R$ of section \ref{chromatic} with $Q=2$ is what we would obtain from this construction with $|X|=2$ and $h$ exchanging the elements of $X$.

Now let $g\in F$ be given by two rooted planar trees $\Gamma_\pm$ as in proposition \ref{conditions}. 
Then we define $f_\pm:\{\mbox{ vertices of  }\Gamma_\pm \} \rightarrow X$ by $$f_\pm(x)=h^{d_\pm (x_0,x)}$$ where $d(x_0,x)$ is the
distance from $x_0$ to $x$ on $\Gamma_\pm$. 
\begin{proposition} If $g$ is as above, then $g\xi_0=\xi_0  \iff  f_+=f_-$.

\end{proposition}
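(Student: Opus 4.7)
The plan is to reduce the identity $g\xi_0 = \xi_0$ in the direct limit $\mathcal V$ to a comparison of two spin configurations in a common space $H(\mathcal K)$. The key observation is that $\hat R_h(x,y) = \delta_{y,h(x)}$ is a $(0,1)$-matrix with a single $1$ per row, so in the spin-model planar algebra an $R_h$-crossing propagates its input spin $x$ to the unique output $h(x)$. Consequently any tangle built from $R_h$'s sends a pure tensor (elementary tensor basis vector) on its input boundary to a pure tensor on its output boundary, and the spin at each boundary region is determined deterministically by tracing through the $R_h$-network from the initial region carrying $x_0$.

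First I would choose $\mathcal J \gtrsim \mathcal I$ which is good for $g$, together with a refinement $\mathcal K$ of both $\mathcal I$ and $g(\mathcal J)$ which incorporates the bifurcating trees $T_\pm$ defining $g$. Realising $\xi_0$ in $H(\mathcal K)$ amounts to applying $T_{\mathcal K}^{\mathcal I}$, whose $R_h$-forest may be taken to realise the tree $T_-$; the propagation of $x_0$ from the root outward through this forest reaches the shaded region corresponding to vertex $v$ of $\Gamma_-$ after exactly $d_-(0,v)$ applications of $h$, so that the resulting basis vector has spin $h^{d_-(0,v)}(x_0) = f_-(v)$ at $v$. Realising $g\xi_0$ in $H(\mathcal K)$ amounts to applying $g_{\mathcal J}\circ T_{\mathcal J}^{\mathcal I}$ and then embedding up to $\mathcal K$ by straight lines. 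Since $g_{\mathcal J}$ is a rectangular tangle of straight-line strings and carries no $R_h$'s, every crossing comes from $T_{\mathcal J}^{\mathcal I}$, whose $R_h$-forest can be taken to realise the tree $T_+$; the analogous propagation delivers spin $h^{d_+(0,v)}(x_0) = f_+(v)$ at vertex $v$ of $\Gamma_+$.

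Because $\Gamma_+$ and $\Gamma_-$ share the vertex set $\{0, 1, \ldots, N\}$ and the boundary of $H(\mathcal K)$ carries a single spin at the region of each such vertex, equality of the two basis vectors $\xi_0$ and $g\xi_0$ in $H(\mathcal K)$ is exactly the pointwise equality $f_+(v) = f_-(v)$ for every $v$, i.e.\ $f_+ = f_-$. The main obstacle in executing this plan is the bookkeeping linking the bifurcating trees $T_\pm$ (which dictate the placement of the $R_h$ crossings) to the planar graphs $\Gamma_\pm$ (whose vertices label the shaded regions carrying the spins). This requires unpacking the construction of $\Gamma_\pm$ from $T_\pm$ given in Section \ref{reps} and verifying that a spin propagated through the $R_h$-forest from the root region reaches the region corresponding to $v$ after exactly $d_\pm(0,v)$ crossings; once that identification is in place the proposition reads off directly from the two spin configurations.
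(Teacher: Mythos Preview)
Your proposal is correct and follows essentially the same approach as the paper, which simply says to ``follow the discussion after proposition \ref{weak}, taking into account that $h$ need no longer be an involution.'' You have made explicit what that discussion leaves implicit: the $R_h$-crossing deterministically propagates a spin $x$ to $h(x)$, so embedding $\xi_0$ via the forest of $T_\pm$ produces a basis vector whose spin at the region of vertex $v$ is $h^{d_\pm(0,v)}(x_0)$, and equality of the two resulting elementary tensors is exactly $f_+=f_-$. The only caveat is the bookkeeping you flag yourself---matching the $R_h$-crossings of the bifurcating trees $T_\pm$ to the edges of $\Gamma_\pm$ so that the number of crossings from the root region to the region of $v$ equals the tree distance $d_\pm(0,v)$---but this is precisely the content of the $T_\pm \leftrightarrow \Gamma_\pm$ correspondence of Section~\ref{reps}, and once granted your argument goes through.
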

\begin{proof} Just follow the discussion after proposition \ref{weak}, taking into account that $h$ need no longer be an involution.
This also allows the enumeration of the stabilizers of the $\xi_0$.
\end{proof}

\end{document}